\title{On certain sums of number theory}
\author{Olivier Bordellès}
\address{2 allée de la combe \\ 43000 Aiguilhe \\ France}
\email{borde43@wanadoo.fr}
\date{}
\dedicatory{}
\newcommand{\Z}{\mathbb {Z}}
\newcommand{\R}{\mathbb {R}}
\newcommand{\C}{\mathbb {C}}
\newtheorem{theorem}{Theorem}[section]
\newtheorem{prop}[theorem]{Proposition}
\newtheorem{coro}[theorem]{Corollary}
\newtheorem{lem}[theorem]{Lemma}
\theoremstyle{remark}
\begin{document}

\begin{abstract}
We study sums of the shape $\sum_{n \leqslant x} f \left( \lfloor x/n \rfloor \right)$ where $f$ is either the von Mangoldt function or the Dirichlet-Piltz divisor functions. We improve previous estimates when $f = \Lambda$ and $f = \tau$, and provide new results when $f = \tau_r$ with $r \geqslant 3$, breaking the $\frac{1}{2}$-barrier in each case. The functions $f=\mu^2$, $f=2^\omega$ and $f=\omega$ are also investigated.
\end{abstract}

\subjclass[2020]{11N37, 11L07.}
\keywords{Dirichlet hyperbola principle, Exponential sums of type I and II, Vaughan's identity, exponent pairs.}

\maketitle

\thispagestyle{myheadings}
\font\rms=cmr8 
\font\its=cmti8 
\font\bfs=cmbx8

\section{Introduction and results}
\label{s1}

\noindent
Recently, there has been a great deal of interest in estimating sums of the form
$$\sum_{n \leqslant x} f \left( \left \lfloor \frac{x}{n} \right \rfloor \right)$$
where $\lfloor x \rfloor$ is the integer part of $x \in \R$, and $f$ is an arithmetic function. Historically, the first one goes back to Dirichlet in the middle of the 19th century when he proved that
$$\sum_{n \leqslant x} \left \lfloor \frac{x}{n} \right \rfloor = x \log x + x (2 \gamma - 1) + O \left( \sqrt{x} \right).$$
Subsequently, the exponent in the error term has been improved, the best result to date being $x^{517/\np{1648} + \varepsilon}$, which is due to Bourgain \& Watt \cite{bouwa}. In \cite{bor}, the authors established a quite general result involving arithmetic functions $f$ which are not too large. More precisely, if $f$ satisfies
$$\sum_{n \leqslant x} \left| f(n) \right|^2 \ll x^\alpha$$
for some $\alpha \in \left( 0,2  \right)$, then it is proved that
$$\sum_{n \leqslant x} f \left( \left \lfloor \frac{x}{n} \right \rfloor \right) = x \sum_{n=1}^\infty \frac{f(n)}{n(n+1)} + O \left( x^{\frac{1}{3} (\alpha+1)} (\log x)^{\frac{1}{3} (\alpha+1) + o(1)} \right).$$
This estimate was then improved independently by Wu \cite[Theorem~1.2]{wu} and Zhai \cite[Theorem~1]{zhai} who proved that
$$\sum_{n \leqslant x} f \left( \left \lfloor \frac{x}{n} \right \rfloor \right) = x \sum_{n=1}^\infty \frac{f(n)}{n(n+1)} + O \left( x^{\frac{1}{2} (\alpha+1)} (\log x)^{\theta} \right)$$
provided that $f(n) \ll n^\alpha (\log n)^\theta$ for some $\alpha \in \left[ 0,1 \right)$ and $\theta \geqslant 0$. For arithmetic functions $f$ satisfying the Ramanujan hypothesis $f(n) \ll n^\varepsilon$, this implies
\begin{equation}
   \sum_{n \leqslant x} f \left( \left \lfloor \frac{x}{n} \right \rfloor \right) = x \sum_{n=1}^\infty \frac{f(n)}{n(n+1)} + O \left( x^{\frac{1}{2} + \varepsilon} \right). \label{eq:1/2-barrier}
\end{equation}
The question of breaking the $\frac{1}{2}$-barrier for specific arithmetic functions $f$ then arises naturally. Using Vaughan's identity and the exponent pair $\left (\frac{1}{6},\frac{2}{3} \right)$, Ma and Wu \cite{mawu} showed that
\begin{equation}
   \sum_{n \leqslant x} \Lambda \left( \left \lfloor \frac{x}{n} \right \rfloor \right) = x \sum_{n=1}^\infty \frac{\Lambda(n)}{n(n+1)} + O \left( x^{\frac{35}{71} + \varepsilon} \right). \label{eq:Ma_Wu}
\end{equation}
In a similar but simpler way, Ma and Sun \cite{masun} proved that
\begin{equation}
   \sum_{n \leqslant x} \tau \left( \left \lfloor \frac{x}{n} \right \rfloor \right) = x \sum_{n=1}^\infty \frac{\tau(n)}{n(n+1)} + O \left( x^{\frac{11}{23} + \varepsilon} \right). \label{eq:Ma_Sun}
\end{equation}
Note that $\frac{35}{71} \approx \np{0.4929} \dotsc$ and $\frac{11}{23} \approx \np{0.4782} \dotsc$ The aim of this work is to improve these results when $f=\Lambda$ and $f=\tau$, to extend them to the case $f = \tau_r$ for some fixed integer $r \geqslant 2$, and also to study the cases $f=\mu^2$, $f=2^\omega$ and $f=\omega$. 

\begin{theorem}
\label{th:Lambda}
Let $(k,\ell)$ be an exponent pair satisfying $k \leqslant \frac{1}{6}$, $3k+4\ell \geqslant 1$ and $\ell^2 + \ell + 3 - k(5-\ell) - 9k^2 >0$. For any $\varepsilon > 0$ and $x$ sufficiently large, we have
$$\sum_{n \leqslant x} \Lambda \left( \left \lfloor \frac{x}{n} \right \rfloor \right) = x \sum_{n=1}^\infty \frac{\Lambda(n)}{n(n+1)} + O_\varepsilon \left( x^{\frac{14(k+1)}{29k-\ell+30} + \varepsilon} \right).$$
\end{theorem}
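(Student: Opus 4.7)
The plan is to reduce the asymptotic formula to bounds on twisted exponential sums over primes. First, grouping the summands according to the common value $d = \lfloor x/n \rfloor$ gives
$$\sum_{n \leqslant x} \Lambda \left( \left\lfloor \frac{x}{n} \right\rfloor \right) = \sum_{d \leqslant x} \Lambda(d) \bigl( \lfloor x/d \rfloor - \lfloor x/(d+1) \rfloor \bigr),$$
and substituting $\lfloor t \rfloor = t - \{t\}$ together with the trivial tail estimate $\sum_{d > x} \Lambda(d)/(d(d+1)) \ll (\log x)/x$ produces the expected main term plus the error
$$E(x) = \sum_{d \leqslant x} \Lambda(d) \bigl( \{x/(d+1)\} - \{x/d\} \bigr).$$
The whole problem is reduced to showing $E(x) \ll x^{14(k+1)/(29k-\ell+30) + \varepsilon}$.

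My next step would be to perform a dyadic decomposition of $E(x)$, bounding the range $d \leqslant D$ trivially by $O(D \log x)$ and, on each dyadic interval $d \asymp D_0$ with $D < D_0 \leqslant x$, replacing $\psi(x/d) = \{x/d\} - \tfrac{1}{2}$ by its truncated Fourier expansion of length $H$ via Vaaler's inequality. This recasts $E(x)$ as a weighted average
$$\sum_{h \leqslant H} \frac{1}{h} \bigl| S(D_0, h) \bigr|, \qquad S(D_0, h) = \sum_{d \asymp D_0} \Lambda(d)\, e(hx/d),$$
plus a Vaaler remainder that is eventually absorbed by choosing $H$ a suitable power of $x/D_0$.

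Then I would treat $S(D_0, h)$ by Vaughan's identity with parameters $U, V$, splitting $\Lambda$ into a Type I contribution $\sum_{m \leqslant M} a_m \sum_{n} e(hx/(mn))$ and a Type II contribution $\sum_{m}\sum_{n} a_m b_n\, e(hx/(mn))$ with $m,n$ in prescribed ranges. For Type I, the exponent pair $(k,\ell)$ yields the inner-sum bound $\ll (hx/(mN^2))^k N^\ell$, which is then summed over $m$ and $h$. For Type II, I would apply Cauchy--Schwarz in the longer variable, open the square, and apply the exponent pair again to the resulting phase $hx(1/n_1 - 1/n_2)/m$. The technical constraints $k \leqslant 1/6$ and $3k+4\ell \geqslant 1$ ensure that both Type I and Type II pieces stay below the target in the relevant range of $D_0$, while the polynomial condition $\ell^2 + \ell + 3 - k(5-\ell) - 9k^2 > 0$ is exactly what is needed for the subsequent optimization over the Vaughan cut-offs to land at the claimed exponent.

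The main obstacle, as usual in this circle of ideas, lies in the Type II sum: the Cauchy--Schwarz loss together with the exponent-pair loss can easily eat up the saving, so a careful balance is essential. Optimizing Type I against Type II fixes the Vaughan parameters $U, V$ as a function of $D_0$ and $h$; the result is then balanced against both the trivial bound on $d \leqslant D$ and the Vaaler error, which determines $D$ and $H$. The final exponent $14(k+1)/(29k-\ell+30)$ is the saddle-point value of this joint optimization, and the stated inequalities on $(k,\ell)$ are precisely the non-degeneracy conditions for that saddle point.
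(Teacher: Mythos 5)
Your overall skeleton (convert to a $\psi$-sum, Vaaler, Vaughan's identity, Type I/II) matches the paper's, but there are two concrete gaps. First, your reduction keeps the full range $D < D_0 \leqslant x$ for the exponential sums $S(D_0,h)=\sum_{d\asymp D_0}\Lambda(d)e(hx/d)$. For $D_0$ near $x$ the phase $hx/d$ has derivative $\asymp hx/D_0^2$, which is nearly stationary for small $h$, and no Type I/II machinery gives a nontrivial bound there (the paper's Proposition~\ref{pro:Lambda_estimate_resultat} explicitly requires $R\leqslant z^{2/3}$, which traces back to the condition $M\ll X$ in Baker's bilinear estimate). The paper avoids this by truncating at \emph{both} ends: it first bounds $\sum_{n\leqslant N}\Lambda(\lfloor x/n\rfloor)\ll Nx^{\varepsilon}$ in the original variable, so that after switching to $d$ the exponential sums only occur for $N<d\leqslant x/N\leqslant x^{2/3}$. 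You need an analogous removal of the range $d> x/N$ (e.g.\ by returning to the count $\lfloor x/d\rfloor-\lfloor x/(d+1)\rfloor$ and using $\sum_{d>x/N}(\lfloor x/d\rfloor-\lfloor x/(d+1)\rfloor)\ll N$); as written, the contribution of $D_0\asymp x$ to your error term is of size $x$ up to logarithms.

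Second, and more importantly for the stated exponent: your Type II treatment (Cauchy--Schwarz in the long variable, open the square, apply an exponent pair to the differenced phase) is exactly the route of Ma and Wu, and it lands at exponents of the shape $\frac{35}{71}$, not at $\frac{14(k+1)}{29k-\ell+30}$. The improvement in this theorem comes from replacing that argument by Baker's bilinear estimate (Proposition~\ref{pro:Baker_II}, i.e.\ Theorem~6 of \cite{bak07}), which gives
$$\sum_{N<n\leqslant 2N}a_n\sum_{M<m\leqslant 2M}b_m\,e\left(X\left(\tfrac{m}{M}\right)^{\alpha}\left(\tfrac{n}{N}\right)^{\beta}\right)\ll X^{1/6}\left(R^{5k+4}M^{\ell-k}\right)^{\frac{1}{6(k+1)}}+R\left(M^{-1/2}+N^{-1/4}\right)$$
up to logarithms; this is what produces the bound $z^{1/6}R^{\frac{7k+\ell+6}{12(k+1)}}+R^{7/8}$ for $\sum_{R<n\leqslant R_1}\Lambda(n)e(z/n)$, and then $\frac{14(k+1)}{29k-\ell+30}=\frac{1+1/6}{\,3-\frac{7k+\ell+6}{12(k+1)}\,}$ after optimizing $H$ and $N$. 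Your assertion that the joint optimization of your Type I/Type II bounds has the claimed saddle-point value is therefore unsupported and, with the Type II bound you propose, false; likewise the three hypotheses on $(k,\ell)$ are not generic non-degeneracy conditions but the precise inequalities needed to make the term $N=x^{\frac{14(k+1)}{29k-\ell+30}}$ dominate the remaining terms in the paper's Proposition~\ref{pro:Lambda}.
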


\begin{theorem}
\label{th:Dirichlet-Piltz}
Let $r \geqslant 2$ be any fixed integer and $(k,\ell)$ be an exponent pair satisfying
\begin{equation}
   1-\ell > k(r-1). \label{eq:condition_tau_r}
\end{equation}
For any $\varepsilon > 0$ and $x$ sufficiently large, we have
$$\sum_{n \leqslant x} \tau_r \left( \left \lfloor \frac{x}{n} \right \rfloor \right) = x \sum_{n=1}^\infty \frac{\tau_r(n)}{n(n+1)} + O_{\varepsilon,r} \left( x^{\frac{k(r-1)+\ell+r-1}{k(r-1)+\ell+2r-1} + \varepsilon} \right).$$
\end{theorem}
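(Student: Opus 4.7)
The plan is to mimic the scheme that yields \eqref{eq:Ma_Sun}, generalising it to an $r$-fold Dirichlet convolution. Fix a parameter $y \in (1, x)$ and split
\[
\sum_{n \leqslant x} \tau_r\!\left(\lfloor x/n\rfloor\right) = \sum_{n \leqslant y} \tau_r\!\left(\lfloor x/n\rfloor\right) + \sum_{y < n \leqslant x} \tau_r\!\left(\lfloor x/n\rfloor\right) =: S_1 + S_2.
\]
For $S_1$, a Shiu-type average bound gives $S_1 \ll y (\log x)^{r-1}$. For $S_2$, I would group the summation by the common value $d = \lfloor x/n\rfloor$: this quantity then ranges over positive integers $d \leqslant N := x/y$, and the number of $n$ producing a given $d$ equals $\lfloor x/d \rfloor - \lfloor x/(d+1) \rfloor$. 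Writing $\lfloor u \rfloor = u - \tfrac12 - \psi(u)$ and completing the tail of the resulting series via $\sum_{d > N} \tau_r(d)/(d(d+1)) \ll y(\log x)^{r-1}/x \cdot x = y(\log x)^{r-1}$, one isolates the claimed main term $x \sum_{d=1}^\infty \tau_r(d)/(d(d+1))$, leaving an error driven by
\[
T := \sum_{d \leqslant N} \tau_r(d)\, \psi(x/d).
\]

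The heart of the argument is the estimation of $T$. Using $\tau_r = \underbrace{1 \ast \cdots \ast 1}_{r}$,
\[
T = \sum_{\substack{d_1, \ldots, d_r \geqslant 1\\ d_1 \cdots d_r \leqslant N}} \psi\!\left(\frac{x}{d_1 \cdots d_r}\right).
\]
I would perform a dyadic decomposition $d_i \in [A_i, 2A_i)$, and in each block choose the summation variable to be $d_r$ with $A_r = \max_i A_i$ (so $A_r \geqslant (A_1\cdots A_{r-1})^{1/(r-1)}$). For fixed $d_1,\ldots,d_{r-1}$ with product $b$, the Erd\H{o}s--Tur\'an inequality combined with the exponent pair $(k,\ell)$ applied to $\phi(t) = x/(bt)$ yields
\[
\sum_{d_r \sim A_r} \psi\!\left(\frac{x}{b\, d_r}\right) \ll A_r^{(\ell-k)/(k+1)}\, (x/b)^{k/(k+1)},
\]
after optimising the truncation parameter. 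Summing trivially over $d_1, \ldots, d_{r-1}$ gives a dyadic contribution $\ll B_0^{1/(k+1)}\, A_r^{(\ell-k)/(k+1)}\, x^{k/(k+1)}$, with $B_0 = A_1\cdots A_{r-1}$. Under the constraints $B_0 \leqslant A_r^{r-1}$ and $B_0 A_r \leqslant N$, and using $1-\ell+k>0$ (implied by \eqref{eq:condition_tau_r}), the worst case occurs at $A_r = N^{1/r}$, $B_0 = N^{(r-1)/r}$, yielding
\[
T \ll x^{k/(k+1)}\, N^{(\ell-k+r-1)/(r(k+1))} (\log x)^{O(1)}.
\]

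Combining everything gives the total error $\ll y(\log x)^{r-1} + x^{k/(k+1)}(x/y)^{(\ell-k+r-1)/(r(k+1))}(\log x)^{O(1)}$. Balancing the two terms forces
\[
y = x^{\frac{k(r-1)+\ell+r-1}{k(r-1)+\ell+2r-1}},
\]
which is precisely the claimed exponent; one checks that this choice lies in the admissible range $1 < y < x$ provided $k(r-1)+\ell < 1$, i.e.\ exactly condition \eqref{eq:condition_tau_r}, which is also what forces the exponent to lie below the $\tfrac12$-barrier.

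The main technical obstacle I anticipate is the bookkeeping in the dyadic analysis: one must verify that the exponent pair bound is genuinely available in every dyadic block (in particular that the effective driving parameter $hx/(BA_r)$ is $\geqslant 1$ after the relevant optimisation, else trivial bounds should be used), and that the extremal configuration $A_r = N^{1/r}$ is indeed forced by the symmetry reduction rather than by some boundary range where trivial estimates dominate. A secondary point is tracking the logarithmic losses through Erd\H{o}s--Tur\'an and the dyadic summation, which contribute the $x^\varepsilon$ in the final bound.
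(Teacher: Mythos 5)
Your proposal is correct in substance and lands on exactly the right exponent, but it is organised quite differently from the paper. The paper proceeds modularly: Proposition~\ref{pro:preliminary} converts the problem, via Vaaler's approximation of $\psi$, into bounding $\sum_{D<d\leqslant 2D}\tau_r(d)\,e\left(hx/(d+a)\right)$ for $a\in\{0,1\}$; Proposition~\ref{pro:exp_sum_divisor} bounds such sums by induction on $r$ through the hyperbola principle (Corollary~\ref{cor:Dirichlet_exponential_principle}), applying the exponent pair to the long variable and the bound $\sum_{n\leqslant z}\tau_{r-1}(n)n^{-\alpha}\ll z^{1-\alpha}(\log z)^{r-1}$ to the rest; and Proposition~\ref{pro:tau_r} performs the Srinivasan optimisation in $H$ and the choice of the splitting point. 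You instead open the $r$-fold convolution all at once inside the $\psi$-sum, decompose dyadically, put the exponent pair on the largest variable and sum trivially over the other $r-1$. Your per-block bound $B_0^{1/(k+1)}A_r^{(\ell-k)/(k+1)}x^{k/(k+1)}$, maximised at $A_r=N^{1/r}$, reproduces exactly the paper's $\left(x^{\alpha}D^{\alpha+\beta}\right)^{1/(\alpha+1)}$ (with $D\asymp x/y$) for $\alpha=k$ and $\beta=\frac{\ell-k}{r}+1-\frac{1}{r}-k$, and the balancing gives the same $\theta=\frac{k(r-1)+\ell+r-1}{k(r-1)+\ell+2r-1}$, with \eqref{eq:condition_tau_r} playing the same role in both arguments (it is equivalent to $2\alpha+\beta<1$, i.e.\ $\theta<\tfrac12$). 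What the paper's packaging buys is reusability (Proposition~\ref{pro:exp_sum_divisor} is invoked again for $2^{\omega}$) and a clean handling of the shifted phase; your flattened version is more self-contained but shifts the burden onto the blockwise bookkeeping you already flag.

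Two points need repair in a full write-up. First, grouping by $d=\lfloor x/n\rfloor$ produces the difference $\psi\left(x/(d+1)\right)-\psi\left(x/d\right)$, so you must also treat $\sum_{d\leqslant N}\tau_r(d)\,\psi\left(x/(d+1)\right)$; after opening the convolution the phase becomes $x/(bd_r+1)$, which is not of the clean form $x/(bt)$, and one needs either partial summation against the slowly varying factor $e\left(-hx/(d(d+1))\right)$ (as in the proof of Proposition~\ref{pro:Lambda}) or an exponent-pair statement for a general amplitude $F$ with $\left|F^{(j)}\right|\asymp TR^{-j}$, which is how Proposition~\ref{pro:exp_sum_divisor} covers both shifts at once. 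Second, the first-derivative-test remainders (the $R^2z^{-1}$-type terms, arising in blocks where the optimal truncation $H$ would drop below $1$) must be summed over all blocks and absorbed; this works precisely because the final choice forces $y\geqslant x^{1/3}$, which is where the auxiliary inequality $4\alpha+2\beta>1$ enters in Proposition~\ref{pro:tau_r}, so it should be verified rather than waved away. Neither issue threatens the result.
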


\noindent
It is proved in \cite[Theorem~6]{bou} that $\left( \frac{13}{84} + \varepsilon, \frac{55}{84} + \varepsilon \right)$ is an exponent pair. We use this result in the cases $f=\Lambda$ and $f = \tau$, and the exponent pair $A \left( \frac{13}{84} + \varepsilon , \frac{55}{84} + \varepsilon \right) = \left( \frac{13}{194} + \varepsilon , \frac{76}{97} + \varepsilon \right)$ when $f = \tau_3$. For $r \geqslant 4$, the condition \eqref{eq:condition_tau_r} requires having $k$ very small. Recently, some improvements in exponential sums have appeared in the literature. As an application, Heath-Brown \cite[Theorem~2]{hea} proved that, for all $m \in \Z_{\geqslant 3}$
$$(k,\ell) = \left( \frac{2}{(m-1)^2(m+2)}\, , \, 1 - \frac{3m-2}{m(m-1)(m+2)} + \varepsilon \right)$$
is an exponent pair. For the function $\tau_r$ with $r \geqslant 4$, we use this result with $m=2r-1$. Putting altogether, we derive the next estimates.

\begin{coro}
Let $r \geqslant 4$ be any fixed integer. For any $\varepsilon > 0$ and $x \geqslant e$ sufficiently large, we have
\begin{align*}
   & \sum_{n \leqslant x} \Lambda \left( \left \lfloor \frac{x}{n} \right \rfloor \right) = x \sum_{n=1}^\infty \frac{\Lambda(n)}{n(n+1)} + O_\varepsilon \left( x^{\frac{97}{203} + \varepsilon} \right) \, ; \\
   & \sum_{n \leqslant x} \tau \left( \left \lfloor \frac{x}{n} \right \rfloor \right) = x \sum_{n=1}^\infty \frac{\tau(n)}{n(n+1)} + O_\varepsilon \left( x^{\frac{19}{40} + \varepsilon} \right) \, ; \\
   & \sum_{n \leqslant x} \tau_3 \left( \left \lfloor \frac{x}{n} \right \rfloor \right) = x \sum_{n=1}^\infty \frac{\tau_3(n)}{n(n+1)} + O_\varepsilon \left( x^{\frac{283}{574} + \varepsilon} \right) \, ; \\
   & \sum_{n \leqslant x} \tau_r \left( \left \lfloor \frac{x}{n} \right \rfloor \right) = x \sum_{n=1}^\infty \frac{\tau_r(n)}{n(n+1)} + O_\varepsilon \left( x^{\frac{1}{2}-\frac{1}{2(4r^3-r-1)} + \varepsilon} \right). \\
\end{align*}
\end{coro}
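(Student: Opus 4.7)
The Corollary is a matter of choosing, for each arithmetic function under consideration, an admissible exponent pair and verifying that it satisfies the hypotheses of the appropriate theorem proved earlier; the stated exponents then emerge from a direct algebraic simplification. The plan is therefore to treat the four cases in turn, keeping $\varepsilon>0$ arbitrary throughout so that the small $\varepsilon$'s appearing inside the exponent pairs are absorbed into the single $\varepsilon$ of the final error term.

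For $f=\Lambda$ and $f=\tau$, I would insert the Bourgain--Watt pair $(k,\ell)=\bigl(\tfrac{13}{84}+\varepsilon,\tfrac{55}{84}+\varepsilon\bigr)$ of \cite{bou} into Theorems~\ref{th:Lambda} and \ref{th:Dirichlet-Piltz} (the latter with $r=2$). The three numerical conditions of Theorem~\ref{th:Lambda}, namely $k\leqslant\tfrac{1}{6}$, $3k+4\ell\geqslant 1$ and $\ell^2+\ell+3-k(5-\ell)-9k^2>0$, are immediate, and condition~\eqref{eq:condition_tau_r} reduces to $\tfrac{29}{84}>\tfrac{13}{84}$. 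A short computation then gives
\[
\frac{14(k+1)}{29k-\ell+30}=\frac{14\cdot 97/84}{1421/42}=\frac{97}{203},\qquad
\frac{k+\ell+1}{k+\ell+3}=\frac{38/21}{80/21}=\frac{19}{40}.
\]

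For $f=\tau_3$, I would apply the $A$-process $A(k,\ell)=\bigl(k/(2k+2),(k+\ell+1)/(2k+2)\bigr)$ to this pair, producing $\bigl(\tfrac{13}{194}+\varepsilon,\tfrac{76}{97}+\varepsilon\bigr)$ as announced. Condition~\eqref{eq:condition_tau_r} with $r=3$ now reads $\tfrac{21}{97}>\tfrac{13}{97}$, and the exponent in Theorem~\ref{th:Dirichlet-Piltz} is $\tfrac{2k+\ell+2}{2k+\ell+5}=\tfrac{283/97}{574/97}=\tfrac{283}{574}$. For $f=\tau_r$ with $r\geqslant 4$, I would invoke Heath-Brown's pair from \cite{hea} with $m=2r-1$, namely $k=\tfrac{1}{2(r-1)^2(2r+1)}$ and $\ell=1-\tfrac{6r-5}{2(r-1)(2r-1)(2r+1)}+\varepsilon$. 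Putting $k(r-1)+\ell$ over the common denominator $2(r-1)(2r-1)(2r+1)$ gives $k(r-1)+\ell=1-\tfrac{2}{4r^2-1}$, from which one obtains $k(r-1)+\ell+r-1=(4r^3-r-2)/(4r^2-1)$ and $k(r-1)+\ell+2r-1=2(4r^3-r-1)/(4r^2-1)$. Condition~\eqref{eq:condition_tau_r} reduces to $6r-5>2r-1$, valid for every $r\geqslant 2$, and the ratio simplifies to $\tfrac{4r^3-r-2}{2(4r^3-r-1)}=\tfrac{1}{2}-\tfrac{1}{2(4r^3-r-1)}$.

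The only real obstacle is bookkeeping: one must verify that the auxiliary $\varepsilon$'s introduced through the exponent pairs (especially in Heath-Brown's pair, where they appear only in $\ell$) can be absorbed into the single $\varepsilon$ of the final error term. This is handled by noting that the exponents $\tfrac{14(k+1)}{29k-\ell+30}$ and $\tfrac{k(r-1)+\ell+r-1}{k(r-1)+\ell+2r-1}$ depend continuously on $(k,\ell)$ and by choosing the inner $\varepsilon$ small enough in terms of the outer one. Beyond this, everything is mechanical substitution and simplification, so no substantive difficulty is anticipated.
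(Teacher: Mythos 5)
Your proposal is correct and follows exactly the paper's route: the corollary is obtained by substituting Bourgain's pair $\left(\frac{13}{84}+\varepsilon,\frac{55}{84}+\varepsilon\right)$ into Theorems~\ref{th:Lambda} and~\ref{th:Dirichlet-Piltz}, its $A$-transform into Theorem~\ref{th:Dirichlet-Piltz} for $r=3$, and Heath-Brown's pair with $m=2r-1$ for $r\geqslant 4$, and all of your arithmetic and condition checks are accurate. The only (cosmetic) slip is attributing the pair $\left(\frac{13}{84}+\varepsilon,\frac{55}{84}+\varepsilon\right)$ to Bourgain--Watt rather than to Bourgain \cite{bou}.
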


\noindent
Note that $\frac{97}{203} \approx 0.4778$, $\frac{19}{40} = \np{0.475}$, $\frac{283}{574} \approx \np{0.493}$ and 

\begin{footnotesize}
\begin{center}
\begin{tabular}{lccc}
$r$ & $4$ & $5$ & $6$ \\
& & & \\
$\frac{1}{2}-\frac{1}{2(4r^3-r-1)}$ & $\frac{125}{251}$ & $\frac{493}{988}$ & $\frac{428}{857}$ \\
& & & \\
Approx & $\np{0.498}$ & $\np{0.499}$ & $\np{0.4994}$ 
\end{tabular}
\end{center}
\end{footnotesize}

\noindent
For the functions $\mu_2 = \mu^2$ and $2^\omega$, we have the following estimates.

\begin{theorem}
\label{th:squarefree}
For any $\varepsilon > 0$ and $x \geqslant e$ sufficiently large, we have
$$\sum_{n \leqslant x} \mu_2 \left( \left \lfloor \frac{x}{n} \right \rfloor \right) = x \sum_{n=1}^\infty \frac{\mu_2(n)}{n(n+1)} + O_\varepsilon \left( x^{\frac{\np{1919}}{\np{4268}} + \varepsilon} \right).$$
\end{theorem}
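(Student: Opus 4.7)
The plan is to adapt the approach used for Theorems~\ref{th:Lambda} and~\ref{th:Dirichlet-Piltz}, taking advantage of the simpler multiplicative structure of $\mu_2$. First, I would regroup the summands according to the common value $d = \lfloor x/n \rfloor$, which gives
$$\sum_{n \leqslant x} \mu_2 \bigl( \lfloor x/n \rfloor \bigr) = \sum_{d \leqslant x} \mu_2(d) \bigl( \lfloor x/d \rfloor - \lfloor x/(d+1) \rfloor \bigr).$$
Introducing a splitting parameter $y$ to be chosen, the tail $d > y$ contributes $O(x/y)$ by the trivial bound $|\mu_2| \leqslant 1$, while on $d \leqslant y$ the expansion $\lfloor x/d \rfloor - \lfloor x/(d+1) \rfloor = x/(d(d+1)) + \psi(x/(d+1)) - \psi(x/d)$ extracts the expected main term $x \sum_{d=1}^{\infty} \mu_2(d)/(d(d+1))$, at a further cost of $O(x/y)$ from truncating the series. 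The problem is thus reduced to estimating the fractional-part sum
$$R(x,y) := \sum_{d \leqslant y} \mu_2(d)\, \psi(x/d).$$

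The squarefree structure is now exploited via $\mu_2(d) = \sum_{a^2 \mid d} \mu(a)$, which produces
$$R(x,y) = \sum_{a \leqslant \sqrt{y}} \mu(a) \sum_{b \leqslant y/a^2} \psi\!\left( \frac{x}{a^2 b} \right).$$
To each inner sum I would apply Vaaler's truncated Fourier approximation of $\psi$ at a height $H$, converting it into short exponential sums $\sum_{b \leqslant y/a^2} e\bigl( h x / (a^2 b) \bigr)$ for $0 < |h| \leqslant H$, at the cost of a Vaaler remainder of order $y/(a^2 H)$ per $a$. Each resulting exponential sum is then handled by an exponent pair $(k, \ell)$---the natural candidate being the Bourgain pair $\bigl( \tfrac{13}{84} + \varepsilon,\, \tfrac{55}{84} + \varepsilon \bigr)$ from~\cite{bou} already used for $f = \Lambda$ and $f = \tau$---giving a bound of shape $(hx/y)^k (y/a^2)^{\ell - k}$. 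Summing over $h$ weighted by the Fourier coefficients (of size $O(1/h)$), and then over $a$ after splitting at an auxiliary cut-off $A$ (below which the exponent-pair bound is used, above which the trivial bound $\sum_b |\psi| \ll y/a^2$ is preferable), yields an estimate $R(x,y) \ll x^{\beta_1} y^{\beta_2}$ together with the tail contributions $O(y/H)$ and $O(y/A)$.

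The final step is a triple optimisation: Vaaler's error, the trivial $a$-tail and the main exponent-pair contribution are balanced against each other, and the resulting bound on $R(x,y)$ is in turn balanced against $x/y$ by choosing $y$ as an appropriate power of $x$. After carrying out the arithmetic one arrives at the announced exponent $\frac{1919}{4268} + \varepsilon$.

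The main obstacle, in my view, is not any single estimate but the joint optimisation in $H$, $A$ and $y$, together with the delicate handling of the sum over $a$. Since $2 \ell / (k+1) = 110/97$ is only slightly larger than $1$, the $a$-sum of the exponent-pair bounds is barely convergent, so that the cut-off $A$ must be placed carefully to avoid losing powers of $y$ near $a \approx \sqrt{y}$, where $y/a^2$ becomes too small for the exponent pair to beat the trivial bound. Once the correct split-points are identified, the computation producing $\frac{1919}{4268}$ is routine.
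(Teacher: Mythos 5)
Your opening reduction (grouping by $d=\lfloor x/n\rfloor$, extracting the main term, and arriving at $\sum_{d\leqslant y}\mu_2(d)\psi(x/d)$ at a cost of $O(x/y)$) is equivalent to the paper's Proposition~\ref{pro:preliminary}, and your use of $\mu_2(d)=\sum_{a^2\mid d}\mu(a)$ followed by Vaaler and exponent pairs coincides with how the paper treats the sums $S_1$ and $S_3$ in Proposition~\ref{pro:squarefree}. The divergence --- and the gap --- is in how the two arguments handle the complementary range, and in the final claim that your bookkeeping produces $\frac{1919}{4268}$.

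Throughout your argument the M\"obius factor $\mu(a)$ is treated trivially: all cancellation is extracted from the smooth inner variable $b$, and in the range where $y/a^2$ is too short for that (i.e.\ $a$ near $\sqrt{y}$) you fall back on the trivial bound $y/a^2$ and a cut-off $A$. This is precisely the range where the paper does something genuinely different: it flips the roles of the two variables (the sum $S_2$ of Proposition~\ref{pro:squarefree}), so that the \emph{inner} sum runs over the M\"obius variable, $\sum_{m\sim\sqrt{R/n}}\mu(m)\,e\bigl(z/(m^2n)\bigr)$, and it obtains cancellation in $\mu$ itself via Vaughan's identity for the M\"obius function (Proposition~\ref{pro:Vaughan_identity_mu}) combined with Baker's Type~II estimate (Corollary~\ref{cor:Baker_II}); this is the content of Lemma~\ref{le:mu_square}, which is the technical heart of the theorem and has no counterpart in your proposal. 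Your plan never produces a bilinear (Type~II) sum and never uses any cancellation of $\mu$, so it cannot reproduce the paper's estimate $z^{\np{3497}/\np{13774}}R^{15/71}$ for $\sum_{R<n\leqslant R_1}\mu_2(n)e(z/n)$.

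Concretely, the exponent $\frac{\np{1919}}{\np{4268}}$ is the outcome of balancing the $S_1$-type bound, taken with the pair $BA\bigl(\frac{13}{84}+\varepsilon,\frac{55}{84}+\varepsilon\bigr)=\bigl(\frac{55}{194}+\varepsilon,\frac{55}{97}+\varepsilon\bigr)$, against the $S_2$-type bound $z^{1/6}R^{5/6}U^{-371/582}+RU^{-9/16}$ coming from Lemma~\ref{le:mu_square}, via the choice $U=\bigl(z^{-68}R^{485}\bigr)^{1/497}$, and then feeding the resulting exponential-sum estimate through Srinivasan's optimisation. None of the quantities $\frac16$, $\frac{38}{97}$, $\frac{7}{8}$, $68$, $485$, $497$ that generate this exponent occur anywhere in your scheme, so the assertion that ``carrying out the arithmetic one arrives at the announced exponent'' cannot be taken on faith: if you actually run your triple optimisation in $H$, $A$, $y$ with the Bourgain pair, the terms $x/y$, $y/H$, $y/A$ and $x^{13/97}y^{42/97}$ balance at a \emph{different} exponent, not at $\frac{\np{1919}}{\np{4268}}$. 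You would also need to justify the exponent-pair inequality uniformly for the very short inner sums near $a\approx\sqrt{y}$ (where its hypotheses are at their most delicate) and to handle the companion sum $\sum_{d\leqslant y}\mu_2(d)\psi\bigl(x/(d+1)\bigr)$, whose treatment by partial summation introduces the factor $1+hx/D^2$ that is in fact the binding constraint in the paper's optimisation. As it stands the proposal is a plausible outline of the easy half of the proof, but it is missing the Vaughan/Type~II ingredient that the stated exponent actually depends on.
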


\begin{theorem}
\label{th:unitary_divisors}
Let $(k,\ell)$ be an exponent pair such that $k + \ell < 1$. For any $\varepsilon > 0$ and $x \geqslant e$ sufficiently large, we have
$$\sum_{n \leqslant x} 2^{\omega\left( \left \lfloor x/n \right \rfloor \right)} = x \sum_{n=1}^\infty \frac{2^{\omega(n)}}{n(n+1)} + O_\varepsilon \left( x^{\frac{2(k+1)}{3k-\ell+5} + \varepsilon} \right).$$
In particular
$$\sum_{n \leqslant x} 2^{\omega\left( \left \lfloor x/n \right \rfloor \right)} = x \sum_{n=1}^\infty \frac{2^{\omega(n)}}{n(n+1)} + O_\varepsilon \left( x^{\frac{97}{202} + \varepsilon} \right).$$
\end{theorem}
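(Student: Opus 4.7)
The plan is to adapt the strategy of Ma--Sun \cite{masun} and Wu \cite{wu} for $f = \tau$, using the identity $2^\omega = \mathbf 1 * \mu^2$ in place of $\tau = \mathbf 1 * \mathbf 1$. Fix a parameter $y \in [1, x]$ to be optimised at the end, and split
$$\sum_{n \leq x} 2^{\omega(\lfloor x/n\rfloor)} = S_1 + S_2, \qquad S_1 = \sum_{n \leq x/y} 2^{\omega(\lfloor x/n \rfloor)}, \quad S_2 = \sum_{x/y < n \leq x} 2^{\omega(\lfloor x/n \rfloor)}.$$
The sum $S_2$ is handled by grouping according to $m = \lfloor x/n\rfloor \in \{1, \ldots, \lfloor y \rfloor - 1\}$: each such $m$ is realised on the interval $(x/(m+1), x/m] \subset (x/y, x]$, whence
$$S_2 = x\sum_{m < y} \frac{2^{\omega(m)}}{m(m+1)} + O(y \log y).$$

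For $S_1$, I would write $2^{\omega(m)} = \sum_{d \mid m} \mu^2(d)$, exchange summations, and parametrise by $\lfloor x/n \rfloor = dq$ to obtain
$$S_1 = \sum_{d\text{ squarefree}}\; \sum_{\substack{q \geq 1 \\ y \leq dq \leq x}}\bigl(\lfloor x/(dq)\rfloor - \lfloor x/(dq + 1)\rfloor\bigr).$$
Replacing each floor difference by $x/(dq(dq+1)) + \psi(x/(dq+1)) - \psi(x/(dq))$ (with $\psi$ the standard sawtooth) and gathering the smooth contribution produces, together with the main term of $S_2$, the expected leading term $xC + O(y\log y + \log x)$ where $C = \sum_{n \geq 1} 2^{\omega(n)}/(n(n+1))$. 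The remaining oscillatory sum is
$$E = \sum_{d\text{ squarefree}}\; \sum_{\substack{q \geq 1 \\ y \leq dq \leq x}} \bigl(\psi(x/(dq+1)) - \psi(x/(dq))\bigr).$$

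To estimate $E$, I would split $E = E_1 - E_2$ according to the two $\psi$-terms, remove the squarefree constraint by $\mu^2(d) = \sum_{a^2 \mid d}\mu(a)$, and dyadically decompose the resulting triple sum in $(a, d', q)$ with $d = a^2 d'$. For fixed $(a, d')$ the inner $q$-sum is bounded by combining Vaaler's truncated Fourier expansion of $\psi$ with the exponent pair $(k, \ell)$: for $q \sim Q$,
$$\sum_{q \sim Q}\psi\bigl(x/(a^2 d' q)\bigr) \ll_\varepsilon \bigl(x/(a^2 d')\bigr)^{k/(k+1)}\, Q^{(\ell-k)/(k+1)}\, x^\varepsilon,$$
with the same bound for the shifted sum in which $dq+1$ replaces $dq$. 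Summing over the outer variables $(a, d')$ and the admissible dyadic ranges, using the hypothesis $k+\ell < 1$ to ensure the various geometric series remain under control, yields an estimate of the form $|E| \ll_\varepsilon x^{\alpha+\varepsilon} y^{-\beta}$ with $\alpha/(\beta+1) = 2(k+1)/(3k-\ell+5)$.

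Balancing this error with $y\log y$ fixes $y = x^{2(k+1)/(3k-\ell+5)}$ and gives the announced exponent; specialising to the exponent pair $(13/84+\varepsilon, 55/84+\varepsilon)$ produces the numerical bound $97/202$. The main obstacle will be the exponential-sum estimation of $E$: a cavalier application of exponent pairs to the two-variable $\psi$-sum only yields $O(x)$, so the $\tfrac12$-barrier is not broken. Genuine savings emerge only by simultaneously exploiting the M\"obius cancellation carried by the squarefree weight and the near-telescoping structure of the difference $\psi(x/(m+1)) - \psi(x/m)$; tracking these cleanly through the dyadic decomposition, and verifying that $k+\ell<1$ suffices for the resulting summations, is where the technical heart of the proof lies.
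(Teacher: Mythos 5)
Your overall skeleton --- split at a parameter, extract the main term from the small-$m$ range, reduce the remainder to a $\psi$-sum, attack that with Vaaler's approximation and exponent pairs, then balance --- is the same framework as Proposition~\ref{pro:preliminary}, and your main-term bookkeeping is fine. But the entire analytic content of the theorem is the estimation of your sum $E$, which is precisely the part you leave open, and the route you sketch for it would not deliver the stated exponent. Two concrete problems. First, your two-piece split lets $m=dq$ run over all of $[y,x]$; on the range $m>x^{2/3}$ (equivalently $n<x^{1/3}$) the phases $x/m$ vary too slowly for any exponent-pair saving, and this piece must instead be bounded by counting $n$ trivially --- this is why the paper's reduction keeps three ranges and confines the exponential sums to $N<D\leqslant x/N$ with $x^{1/3}\leqslant N<x^{1/2}$. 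That omission is repairable. Second, and fatally, fixing the squarefree $d$ and applying a one-variable exponent pair to the $q$-sum gives, on a dyadic block $m\sim M$, roughly
$$\sum_{d\leqslant M}\left(\frac{x}{d}\right)^{\frac{k}{k+1}}\left(\frac{M}{d}\right)^{\frac{\ell-k}{k+1}}\asymp x^{\frac{k}{k+1}}M^{\frac{1}{k+1}},$$
which already exceeds $x^{1/2}$ at $M=x^{1/2}$ for every nontrivial pair. Your closing remark that the cavalier bound only yields $O(x)$ concedes this, but the remedies you invoke --- M\"obius cancellation in the squarefree weight and near-telescoping of the $\psi$-difference --- are not what rescues the argument, and neither is used in the paper for $2^\omega$ (no Vaughan-type identity is needed here, unlike for $\Lambda$ and $\mu^2$).

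The actual mechanism is the convolution identity $2^\omega=\chi_2\star\tau$, i.e.\ $\mathbf{1}\star\chi_2\star\mathbf{1}$ rather than $\mathbf{1}\star\mu^2$. Since $\chi_2$ is supported on squares, Corollary~\ref{cor:Dirichlet_exponential_principle} with $U=R$ collapses to a single sum over $n\leqslant\sqrt{2R}$ of $\tau$-weighted inner sums with phase $z/(mn^2)$, and Proposition~\ref{pro:exp_sum_divisor} (the inductive extension of exponent pairs to $\tau_r$-weights, which always applies the pair to the longer of the two $\mathbf{1}\star\mathbf{1}$ variables) bounds each inner block by $(z/R)^k(R/n^2)^{(1+\ell-k)/2}$ plus a trivial term; the sum over $n$ converges because $1+\ell-k\geqslant 1$, giving $z^kR^{(1+\ell-3k)/2}$ (Proposition~\ref{pro:unitary}). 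One then still needs the $D/H$ term from Vaaler together with Srinivasan's optimization in $H$ before choosing $N=x^{\frac{2(k+1)}{3k-\ell+5}}$, and the hypothesis $k+\ell<1$ is exactly what guarantees $N<x^{1/2}$ and absorbs the secondary terms. None of these steps appears in your proposal, and your final assertion that $|E|\ll x^{\alpha+\varepsilon}y^{-\beta}$ with $\alpha/(\beta+1)=2(k+1)/(3k-\ell+5)$ is reverse-engineered from the answer rather than derived. As it stands, the proposal identifies the target and the general strategy but does not prove the theorem.
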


\noindent
Note that $\frac{\np{1919}}{\np{4268}} \approx \np{0.4496}$ and $\frac{97}{202} \approx \np{0.4802}$.

\medskip

\noindent
Our last estimate deals with the additive function $\omega$ and improves the main result of \cite{bouka}.

\begin{theorem}
\label{th:omega}
For any $\varepsilon > 0$ and $x \geqslant e$ sufficiently large, we have
$$\sum_{n \leqslant x} \omega \left( \left \lfloor \frac{x}{n} \right \rfloor \right) = x \sum_{n=1}^\infty \frac{\omega(n)}{n(n+1)} + O_\varepsilon \left( x^{\frac{455}{914} + \varepsilon} \right).$$
\end{theorem}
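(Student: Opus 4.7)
The strategy is to exploit the Dirichlet convolution identity $\omega = 1_{\mathcal{P}} * 1$, where $1_{\mathcal{P}}$ denotes the characteristic function of the primes; this is the natural analogue of the decomposition $2^\omega = \mu_2 * 1$ used in the proof of Theorem~\ref{th:unitary_divisors}. Exchanging the order of summation and parametrising the divisibility condition $p \mid \lfloor x/n \rfloor$ by $\lfloor x/n \rfloor = pm$ yields
$$\sum_{n \leqslant x} \omega \! \left( \left\lfloor \frac{x}{n} \right\rfloor \right) = \sum_{p} \sum_{m \leqslant x/p} \left( \left\lfloor \frac{x}{pm} \right\rfloor - \left\lfloor \frac{x}{pm+1} \right\rfloor \right).$$
Writing $\lfloor t \rfloor = t - \psi(t) - \frac{1}{2}$ with $\psi(t) = \{t\}-\frac{1}{2}$, and noting that, under the change of variables $q = pm$, the coefficient of $x/(q(q+1))$ equals $\#\{p \text{ prime} : p \mid q\} = \omega(q)$, one recovers the announced main term (the tail $x \sum_{q > x} \omega(q)/(q(q+1)) \ll \log x$ being negligible) and is left with the task of estimating
$$E(x) := \sum_{p} \sum_{m \leqslant x/p} \left[ \psi \! \left( \frac{x}{pm+1} \right) - \psi \! \left( \frac{x}{pm} \right) \right].$$

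To bound $E(x)$ I would dyadically split the variables as $p \sim P$ and $m \sim M$ with $PM \leqslant x$. A Vaaler-type truncated Fourier expansion of $\psi$ of length $H$ then reduces the problem to estimating bilinear exponential sums
$$\sum_{p \sim P} \sum_{m \sim M} \exp \! \left( \frac{2 \pi i h x}{pm+j} \right), \qquad j \in \{0,1\}, \ 1 \leqslant h \leqslant H,$$
uniformly in the parameters. To exploit the prime variable I would apply Vaughan's identity, exactly as in the proof of Theorem~\ref{th:Lambda}, replacing $1_{\mathcal{P}}$ by a combination of type I and type II bilinear forms; the type I sums (with a smooth inner variable) are handled by the exponent pair $\left( \frac{13}{84}+\varepsilon, \frac{55}{84}+\varepsilon \right)$ of \cite{bou}, and the type II sums are treated by a Cauchy--Schwarz step followed by a second derivative test (or an exponent pair estimate) on the resulting smoothed phase.

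The main obstacle is the control of the type II sums: the shift $pm+1$ versus $pm$ in the denominator obstructs an immediate Poisson summation separating the $p$- and $m$-variables, so one has to absorb the shift via the first-order expansion $x/(pm+1) = x/(pm) - x/(pm)^2 + O(x/(pm)^3)$ before the exponent pair bounds become applicable. The final exponent $\frac{455}{914} = \frac{1}{2} - \frac{1}{457}$ should then emerge from a careful optimisation of the Vaughan split, the Fourier truncation $H$, and the dyadic parameters $P$ and $M$. The slim margin over the $\frac{1}{2}$-barrier (in contrast to the considerably larger gain $\frac{1}{2} - \frac{215}{4268}$ achieved for $\mu_2$ in Theorem~\ref{th:squarefree}) is exactly what one expects when the outer weight is the prime indicator $1_{\mathcal{P}}$: it is much sparser than $\mu_2$ and provides no M\"obius-type cancellation in the outer sum, which severely limits the savings obtainable over primes.
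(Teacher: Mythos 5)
Your outline does coincide with the paper's strategy in broad terms: the proof indeed rests on the convolution $\omega = \mathbf{1}_{\mathcal{P}} \star \mathbf{1}$ (implemented via Corollary~\ref{cor:Dirichlet_exponential_principle}), on Vaaler's approximation of $\psi$ (Proposition~\ref{pro:preliminary}), on exponent pairs for the ranges where the prime variable is short, and on Vaughan's identity for the ranges where it is long. Two of your side remarks are off, though. The shift from $pm$ to $pm+1$ is not the main obstacle: it is removed by Abel summation at the cost of a harmless factor $1+hx/D^2$, exactly as in the proof of Proposition~\ref{pro:Lambda}. And your unsplit identity leaves the range $q>x/N$ inside $E(x)$, where no exponential-sum estimate applies (Proposition~\ref{pro:omega} needs $R \leqslant z^{26/41}$); one must first perform the truncation of Proposition~\ref{pro:preliminary}, treating $n\leqslant N$ trivially.

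The genuine gap is in your treatment of the type II sums. To reach $\frac{455}{914}$ the paper needs $z^{-\varepsilon}\sum_{R<n\leqslant R_1}\Lambda(n)\,e(z/n)\ll z^{1/6}R^{47/84}+R^{7/8}$ (Proposition~\ref{pro:Lambda_estimate_resultat} with $(k,\ell)=(\frac16,\frac23)$), which is fed into the long-prime-range sum $S_2$ of Proposition~\ref{pro:omega} by partial summation and then optimized in $U$ to give $z^{1/6}R^{128/195}$. The $z^{1/6}$-saving there comes from Baker's bilinear estimate (Proposition~\ref{pro:Baker_II}, from \cite{bak07}, via Corollary~\ref{cor:Baker_II}), a double-large-sieve type bound, and emphatically not from ``a Cauchy--Schwarz step followed by a second derivative test.'' The classical Cauchy--Schwarz treatment of the type II range is precisely what yields the weaker exponent $\frac{35}{71}$ of Ma--Wu in \eqref{eq:Ma_Wu}; a type II input of that quality, propagated through the $\omega$-argument, gives an exponent strictly worse than $\frac{455}{914}$. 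Finally, your proposal stops at ``the exponent should emerge from a careful optimisation'': the successive choices of $U$, of $H$ (Srinivasan's lemma) and of $N$ are where the number $\frac{455}{914}$ is actually produced, and they cannot be carried out without the quantitative input above. (Your closing heuristic is also misleading: the larger saving for $\mu_2$ comes from writing $\mu_2=\chi_2\star\mathbf{1}$ with $\chi_2$ supported on squares, i.e.\ from the shortness of one convolution factor, not from M\"obius cancellation in the outer sum.)
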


\noindent
Note that $\frac{455}{914} \approx \np{0.4978}$. Also note that the error term can be sharpened to $O_\varepsilon \left( x^{\np{0.4958} + \varepsilon} \right)$ if we choose the exponent pair $\left( \frac{13}{194} + \varepsilon , \frac{76}{97} + \varepsilon \right)$ instead of $\left( \frac{1}{6}, \frac{2}{3} \right)$.

\section{Notation}

\noindent
If $f$ and $g$ are any arithmetic functions, $f \star g$ is the Dirichlet convolution product defined by
$$(f \star g)(n) = \sum_{d \mid n} f(d) g(n/d).$$
Let $\mu$ be the M\"{o}bius function, $\Lambda = \mu \star \log $ is the von Mangoldt function, and $\mu_2 = \mu^2$ is the characteristic function of the set of squarefree numbers. As usual, $\omega(n)$ is the number of distinct prime factors of $n$ with the convention $\omega(1)=0$, so that $2^{\omega(n)}$ counts the number of unitary divisors of $n$. If $r \geqslant 1$ is any fixed positive integer, the Dirichlet-Piltz divisor function $\tau_r$ is inductively defined by $\tau_1 = \mathbf{1}$ and, for $r \geqslant 2$, $\tau_r = \tau_{r-1} \star \mathbf{1}$, and it is customary to set $\tau = \tau_2$. Finally, for any $x \in \R$, $e(x) = e^{2 i \pi x}$ and $\psi(x) = x - \lfloor x \rfloor - \frac{1}{2}$ is the $1$st Bernoulli function.

\section{Preliminary}

\noindent
The next result relates our problem to estimating certain exponential sums.

\begin{prop}
\label{pro:preliminary}
Let $x \geqslant e$ large, $f:\Z_{\geqslant 1} \to \C$ satisfying $f(n) \ll n^\varepsilon$ and let $x^{1/3} \leqslant N < x^{1/2}$ be a parameter. Then, for all $H \in \Z_{\geqslant 1}$
\begin{multline*}
   \sum_{n \leqslant x} f \left( \left \lfloor \frac{x}{n} \right \rfloor \right) = x \sum_{n=1}^\infty \frac{f(n)}{n(n+1)} \\
   + O \left\lbrace Nx^\varepsilon + x^\varepsilon \max_{N < D \leqslant x/N} \left( \frac{D}{H} + \sum_{h \leqslant H} \frac{1}{h} \sum_{a=0}^1 \left| \sum_{D < d \leqslant 2D} f(d) \, e \left( \frac{hx}{d+a}\right) \right | \right) \right\rbrace.
\end{multline*}   
\end{prop}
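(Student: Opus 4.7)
The plan is to swap the order of summation, extract the main term via the identity $\lfloor y \rfloor = y - 1/2 - \psi(y)$, and then reduce the surviving error to the exponential sums in the statement by applying a truncated Fourier expansion of $\psi$.

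\textbf{Swap and main term.} Collecting the $n \leq x$ producing a given $d = \lfloor x/n \rfloor$, the multiplicity of $d$ is $\lfloor x/d \rfloor - \lfloor x/(d+1) \rfloor$, hence
\[
\sum_{n \leq x} f(\lfloor x/n \rfloor) \;=\; \sum_{d \leq x} f(d) \bigl(\lfloor x/d \rfloor - \lfloor x/(d+1) \rfloor\bigr).
\]
Expressing the bracket as $x/(d(d+1)) + \psi(x/(d+1)) - \psi(x/d)$ and completing the main sum to infinity at cost $O(x^\varepsilon)$ (via $f(d) \ll d^\varepsilon$) yields the expected $x \sum_{n=1}^\infty f(n)/(n(n+1))$, up to the error
\[
E \;=\; \sum_{d \leq x} f(d)\bigl(\psi(x/(d+1)) - \psi(x/d)\bigr).
\]

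\textbf{Tail trimming and dyadic reduction.} I would split $E$ at the two cut-offs $d = N$ and $d = x/N$. The range $d \leq N$ contributes $O(N x^\varepsilon)$ by the trivial bound $|\psi| \leq 1/2$. For the range $d > x/N$, the identity
\[
\psi(x/(d+1)) - \psi(x/d) \;=\; \bigl(\lfloor x/d \rfloor - \lfloor x/(d+1) \rfloor\bigr) - \frac{x}{d(d+1)}
\]
again yields $O(N x^\varepsilon)$: the floor differences telescope to $O(N)$ and $\sum_{d > x/N} x/(d(d+1)) \ll N$. A dyadic split of the middle range $N < d \leq x/N$ (whose $\log x$ cost is absorbed into $x^\varepsilon$) reduces matters to bounding
\[
E_D \;=\; \sum_{D < d \leq 2D} f(d)\bigl(\psi(x/(d+1)) - \psi(x/d)\bigr)
\]
uniformly for $N < D \leq x/N$.

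\textbf{Vaaler expansion and main obstacle.} On each dyadic block I would plug in a Vaaler-type approximation of the form $\psi(t) = -\sum_{1 \leq |h| \leq H} e(ht)/(2\pi i h) + O(\Delta_H(t))$, where $\Delta_H$ is a nonnegative Fej\'er-type majorant whose Fourier spectrum is supported in $[-H,H]$ with every coefficient of size $O(1/H)$. Separating the two contributions $a \in \{0,1\}$ in the difference of $\psi$-values, the principal part reproduces exactly the sums $(1/h)\bigl|\sum_{D < d \leq 2D} f(d) e(hx/(d+a))\bigr|$ appearing in the statement. Expanding $\Delta_H$ in its own Fourier series, its constant term produces $|\sum_d f(d)|/H \ll D x^\varepsilon/H$, supplying the $D/H$ piece of the error; its nonzero harmonics reproduce exponential sums of the same shape weighted by $1/H$, and these are absorbed into the principal bound because $1/H \leq 1/h$ for $1 \leq h \leq H$. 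The delicate step to get right is precisely this last absorption of the Vaaler remainder, which keeps the argument free of any independent discrepancy analysis of the fractional parts $\{x/(d+a)\}$; collecting all contributions then produces the proposition.
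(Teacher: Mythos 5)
Your argument is correct and is essentially the paper's proof: the paper splits at $n=N$ first and converts only the range $n>N$ into a sum over $d\leqslant x/N$ (picking up an error $O(x^\varepsilon(1+xN^{-2}))$ that it absorbs using $N\geqslant x^{1/3}$), whereas you use the exact identity over all $d\leqslant x$ and trim the tail $d>x/N$ by telescoping the floor differences, but the main-term extraction via $\lfloor y\rfloor=y-\tfrac12-\psi(y)$, the trivial treatment of $d\leqslant N$, and the final appeal to Vaaler's approximation are the same. One caveat on the step you yourself flag as delicate: the nonzero harmonics of the Fej\'er majorant come weighted by $|f(d)|$ rather than $f(d)$ (and the $h<0$ terms of the principal part involve $\overline{f(d)}$ when $f$ is complex), so the absorption into $\frac{1}{h}\bigl|\sum_{D<d\leqslant 2D}f(d)\,e(hx/(d+a))\bigr|$ is literal only for nonnegative $f$; this is harmless since every application in the paper has $f\geqslant 0$, and the paper's own one-line invocation of Vaaler glosses over the same point.
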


\begin{proof}
Note first that the series in the main term above converges absolutely. Following \cite{masun,mawu}, we split the sum into two subsums
$$\sum_{n \leqslant x} f \left( \left \lfloor \frac{x}{n} \right \rfloor \right) = \left( \sum_{n \leqslant N} + \sum_{N < n \leqslant x} \right) f \left( \left \lfloor \frac{x}{n} \right \rfloor \right) := S_1 + S_2.$$
where $x^{1/3} \leqslant N < x^{1/2}$ is a parameter at our disposal. Trivially
$$S_1 \ll x^\varepsilon \sum_{n \leqslant N} \frac{1}{n^\varepsilon} \ll Nx^\varepsilon.$$
Next
\begin{align*}
   S_2 &= \sum_{d \leqslant x/N} f(d) \left( \left \lfloor \frac{x}{d} \right \rfloor - \left \lfloor \frac{x}{d+1} \right \rfloor \right) + O \left\lbrace x^\varepsilon \left( 1 + xN^{-2}\right) \right\rbrace \\
   &= \sum_{d \leqslant x/N} f(d) \left( \frac{x}{d(d+1)} - \psi \left(  \frac{x}{d} \right) + \psi \left( \frac{x}{d+1} \right) \right) + O \left( x^{1+\varepsilon} N^{-2}\right) \\
   &= x \sum_{d=1}^\infty \frac{f(d)}{d(d+1)} - x \sum_{d> x/N}^\infty \frac{f(d)}{d(d+1)} + \sum_{d \leqslant N} f(d) \left( \psi \left( \frac{x}{d+1} \right) - \psi \left(  \frac{x}{d} \right) \right) \\
   & \qquad + \sum_{N < d \leqslant x/N} f(d) \left( \psi \left( \frac{x}{d+1} \right) - \psi \left(  \frac{x}{d} \right) \right) + O \left( x^{1+\varepsilon} N^{-2}\right).
\end{align*}
Now the condition $f(n) \ll n^\varepsilon$ entails that
$$\left| \sum_{d \leqslant N} f(d) \left( \psi \left( \frac{x}{d+1} \right) - \psi \left(  \frac{x}{d} \right) \right) \right| \leqslant \sum_{d \leqslant N} \left| f(d) \right| \ll N^{1+\varepsilon}$$
and, by partial summation
$$\sum_{d> x/N}^\infty \frac{f(d)}{d(d+1)} \ll \left( \frac{x}{N}\right)^{\varepsilon - 1}.$$
Therefore
$$S_2 = x \sum_{d=1}^\infty \frac{f(d)}{d(d+1)} + \sum_{N < d \leqslant x/N} f(d) \left( \psi \left( \frac{x}{d+1} \right) - \psi \left(  \frac{x}{d} \right) \right) + O \left( N x^\varepsilon + x^{1+\varepsilon} N^{-2} \right) $$
and note that $xN^{-2} \leqslant N$ since $N \geqslant x^{1/3}$. Hence
$$\sum_{n \leqslant x} f \left( \left \lfloor \frac{x}{n} \right \rfloor \right) = x \sum_{d=1}^\infty \frac{f(d)}{d(d+1)} + \sum_{N < d \leqslant x/N} f(d) \left( \psi \left( \frac{x}{d+1} \right) - \psi \left(  \frac{x}{d} \right) \right) + O \left( N x^\varepsilon \right).$$
We complete the proof with the usual Vaaler's approximation of the function $\psi$ by trigonometric polynomials \cite{vaa}, implying the asserted result.
\end{proof}

\section{Useful decompositions}

\noindent
The next result is Vaughan's identity \cite{vau} or \cite[Chapter~24]{dav}. We use the functions
$$\mathbf{1}_U^- (n) = \begin{cases} 1, & \textrm{if\ } n \leqslant U \, ; \\ 0, & \textrm{otherwise}  \, ;\end{cases} \quad \textrm{and} \quad \mathbf{1}_U^+ (n) = \begin{cases} 1, & \textrm{if\ } n > U \, ; \\ 0, & \textrm{otherwise}. \end{cases}$$

\begin{prop}
\label{pro:Vaughan_identity}
Let $1 < R < R_1 \leqslant 2R$ and let $F : \left[ 1,\infty \right) \to \left[ 0,\infty \right)$ be any map. For all $1 \leqslant U \leqslant R^{1/2}$
\begin{multline*}
   \sum_{R < n \leqslant R_1} \Lambda(n) \, e \left( F(n) \right)  = \sum_{n \leqslant U} \mu(n) \sum_{\frac{R}{n} < m \leqslant \frac{R_1}{n}} \, e \left( F(mn) \right) \log m \\
   - \sum_{n \leqslant U^2} a_n \sum_{\frac{R}{n} < m \leqslant \frac{R_1}{n}} \, e \left( F(mn) \right) - \sum_{U < n \leqslant \frac{R_1}{U}} \Lambda(n) \sum_{\frac{R}{n} < m \leqslant \frac{R_1}{n}} b_m \, e \left( F(mn) \right)
\end{multline*}
with
$$a_n := \left( \mu \mathbf{1}_U^- \star \Lambda \mathbf{1}_U^- \right) (n) \quad \text{and} \quad b_m := \left( \mu \mathbf{1}_U^- \star \mathbf{1} \right) (m).$$
\end{prop}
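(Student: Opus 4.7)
The plan is to prove Vaughan's identity pointwise for $\Lambda(n)$ when $n>U$, then multiply by $e(F(n))$, sum over $R<n\le R_1$, and rearrange the three resulting convolutions into the $S_1',S_2',S_3'$ of the statement.

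The algebraic heart of the argument is the Dirichlet-series identity
\begin{equation*}
-\frac{\zeta'}{\zeta}\;=\;-M\zeta'+G-\zeta GM+\left(-\frac{\zeta'}{\zeta}-G\right)(1-\zeta M),
\end{equation*}
where $M(s):=\sum_{n\le U}\mu(n)n^{-s}$ and $G(s):=\sum_{n\le U}\Lambda(n)n^{-s}$; this is a one-line verification. Translating via $\zeta\leftrightarrow\mathbf{1}$, $-\zeta'\leftrightarrow\log$, $M\leftrightarrow\mu\mathbf{1}_U^-$, $G\leftrightarrow\Lambda\mathbf{1}_U^-$ and specializing to $n>U$ (so that $\Lambda\mathbf{1}_U^-(n)=0$), and using that $(\mathbf{1}\star\mu\mathbf{1}_U^-)(1)=1$ while $(\mathbf{1}\star\mu\mathbf{1}_U^-)(m)=b_m$ for $m\ge 2$, I would obtain
\begin{equation*}
\Lambda(n)\;=\;(\mu\mathbf{1}_U^-\star\log)(n)-(a\star\mathbf{1})(n)-\sum_{\substack{em=n\\ e>U,\ m\ge 2}}\Lambda(e)\,b_m.
\end{equation*}

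Next, I would multiply by $e(F(n))$ and sum over $R<n\le R_1$; note that $n>U$ is automatic since $U\le R^{1/2}<R$. Swapping orders of summation, the first convolution becomes $S_1'$ (writing $n=dm$ with $d\le U$), the second becomes $-S_2'$ (writing $n=Nk$ with $N$ in the support $\{N\le U^2\}$ of $a$), and the third produces the type II expression $-\sum_{e>U}\Lambda(e)\sum_{m\ge 2,\,R/e<m\le R_1/e}b_m\,e(F(em))$.

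The step I expect to require care is the identification of this type II expression with $-S_3'$. The crucial observation is that $b_m=\sum_{d\mid m,\,d\le U}\mu(d)$ coincides with $\sum_{d\mid m}\mu(d)=0$ whenever $2\le m\le U$, since every divisor of such an $m$ is automatically $\le U$. This lets me (i) truncate the outer sum to $e\le R_1/U$, since for $e>R_1/U$ the inner range forces $m\le R_1/e<U$ and hence $b_m=0$; and (ii) drop the constraint $m\ge 2$, because $m=1$ can lie in $(R/e,R_1/e]$ only for $R<e\le R_1$, a range disjoint from $(U,R_1/U]$ as soon as $U\ge 2$. This bookkeeping of the type II ranges is the main obstacle; everything else is formal convolution algebra.
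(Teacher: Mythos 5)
Your proof is correct and is exactly the standard derivation (the Dirichlet-series identity $-\zeta'/\zeta = -M\zeta' + G - \zeta GM + (-\zeta'/\zeta - G)(1-\zeta M)$ expanded into type I and type II sums) that the paper relies on implicitly: the paper states this proposition without proof, citing Vaughan and Davenport. Your observation that dropping the constraint $m \geqslant 2$ in the type II sum requires $U \geqslant 2$ is also accurate --- for $1 \leqslant U < 2$ the identity as stated actually fails, since the terms with $m=1$ and $R < n \leqslant R_1/U$ are then subtracted erroneously --- but this is harmless because the paper only ever applies the proposition with $U = R^{1/3}$.
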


\noindent
A similar result holds for the M\"{o}bius function.

\begin{prop}
\label{pro:Vaughan_identity_mu}
Let $1 < R < R_1 \leqslant 2R$ and let $F : \left[ 1,\infty \right) \to \left[ 0,\infty \right)$ be any map. For all $1 \leqslant U \leqslant R^{1/2}$
\begin{multline*}
   \sum_{R < n \leqslant R_1} \mu(n) \, e \left( F(n) \right)  = - \sum_{n \leqslant U} a_n \sum_{\frac{R}{n} < m \leqslant \frac{R_1}{n}} \, e \left( F(mn) \right) \log m \\
   - \sum_{U < n \leqslant U^2} a_n \sum_{\frac{R}{n} < m \leqslant \frac{R_1}{n}} \, e \left( F(mn) \right) \log m + \sum_{U < n \leqslant \frac{R_1}{U}} b_n \sum_{\max \left( U, \frac{R}{n} \right) < m \leqslant \frac{R_1}{n}} \mu(m) \, e \left( F(mn) \right)
\end{multline*}
with
$$a_n := \left( \mu \mathbf{1}_U^- \star \mu \mathbf{1}_U^- \right) (n) \quad \text{and} \quad b_n := \left( \mu \mathbf{1}_U^+ \star \mathbf{1} \right) (n).$$
\end{prop}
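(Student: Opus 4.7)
\emph{Proof plan.} The plan is to mirror the derivation of Proposition~\ref{pro:Vaughan_identity} (Vaughan's identity for $\Lambda$), with the roles of the Dirichlet series $-\zeta'/\zeta$ and the von Mangoldt function $\Lambda$ played by $1/\zeta$ and $\mu$ respectively. Setting $G(s) = \sum_{n\leqslant U}\mu(n)/n^s$, my starting point is the tautology
\[
\frac{1}{\zeta(s)} \;=\; G(s) \;+\; \frac{1}{\zeta(s)}\bigl(1-\zeta(s)\, G(s)\bigr),
\]
which, after iteration in the remainder combined with a $\zeta'$-factor (analogous to the $-\zeta' G$ term in the four-term identity underlying Proposition~\ref{pro:Vaughan_identity}), yields a Dirichlet-series decomposition whose coefficients can be rearranged as
\[
\mu(n) \;=\; -(a \star L)(n) \;+\; (b \star \mu\mathbf{1}_U^+)(n) \qquad (n > U),
\]
with $L(m)=\log m$, $a = \mu\mathbf{1}_U^-\star\mu\mathbf{1}_U^-$ (supported on $n\leqslant U^2$) and $b = \mu\mathbf{1}_U^+\star\mathbf{1}$ (supported on $n>U$, since for $n\leqslant U$ every divisor of $n$ is itself $\leqslant U$, forcing the convolution defining $b_n$ to vanish). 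The key algebraic observation is $\mu\star\mathbf{1}=\delta$, which permits the Type~II Dirichlet series to be recognised in this clean convolution form.

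Substituting this coefficient identity into $\sum_{R<n\leqslant R_1}\mu(n)\,e(F(n))$ gives the three sums of the statement almost immediately. Since $U\leqslant R^{1/2}$ forces $R\geqslant U^2\geqslant U$, every $n$ in the summation range automatically satisfies $n>U$, so no ``short'' contribution survives. Reversing the order of summation, i.e.\ writing $n=dm$ with $d$ the outer variable carrying either $a_d$ or $b_d$, produces a double sum in $(d,m)$ with $R/d < m \leqslant R_1/d$. The split of the Type~I part into $d\leqslant U$ and $U<d\leqslant U^2$ simply reflects the support of $a_d$. In the Type~II part, the outer range $U<d\leqslant R_1/U$ is dictated by $b_d = 0$ for $d\leqslant U$ together with $dm\leqslant R_1$ and $m>U$, while the inner lower bound $\max(U,R/d)$ enforces simultaneously $R<dm$ and $m>U$.

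The main obstacle is the initial Dirichlet-series identity: producing in one stroke the clean $\mu(m)$ weight of the Type~II sum and the $\log m$ weight of the Type~I sums, with the stated coefficient $a_d = (\mu\mathbf{1}_U^-\star\mu\mathbf{1}_U^-)(d)$. This is algebraically more delicate than in the $\Lambda$-case of Proposition~\ref{pro:Vaughan_identity}, because the same function $\mu$ appears both in the truncated factor $G$ and in the tail factor $M^+ = 1/\zeta - G$, so the cross-terms arising when one expands $(G + M^+)$-type products against $(1-\zeta G)$ must cancel symmetrically. Once this identity is pinned down, the rest of the proof is essentially the same bookkeeping as in Proposition~\ref{pro:Vaughan_identity}.
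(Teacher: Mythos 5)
The paper offers no proof of this proposition (it is merely announced as the analogue of Proposition~\ref{pro:Vaughan_identity}), so your plan must stand on its own, and it has a genuine gap at its core: the coefficient identity $\mu(n)=-(a\star L)(n)+(b\star\mu\mathbf{1}_U^+)(n)$ for $n>U$, with $L(m)=\log m$, is asserted but never derived --- you yourself flag it as ``the main obstacle'' --- and in fact it cannot be derived, because it is false. The Vaughan decomposition of $1/\zeta$ with $G(s)=\sum_{n\leqslant U}\mu(n)n^{-s}$ is
$$\frac{1}{\zeta}=2G-\zeta G^{2}+\left(\frac{1}{\zeta}-G\right)\left(1-\zeta G\right),$$
whose coefficients for $n>U$ give $\mu(n)=-(a\star\mathbf{1})(n)+(b\star\mu\mathbf{1}_U^+)(n)$: the Type~I weight is $\mathbf{1}$, not $\log$. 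The $\zeta'$-factor you import by analogy with the $\Lambda$-case has no source here: in Proposition~\ref{pro:Vaughan_identity} the $\log m$ comes from the factor $-\zeta'$ in $-\zeta'/\zeta=(-\zeta')\cdot(1/\zeta)$, and $1/\zeta$ contains no such factor. If you force one in, you get $\zeta'G^{2}+(1/\zeta-G)(1-\zeta G)=1/\zeta-2G+G^{2}(\zeta+\zeta')$, and $G^{2}(\zeta+\zeta')$ is not supported on integers $\leqslant U^{2}$, so the symmetric cancellation you hope for does not occur. Concretely, with $U=2$ and $n=5$ one finds $-(a\star L)(5)+(b\star\mu\mathbf{1}_U^+)(5)=-\log 5\neq-1=\mu(5)$, whereas $-(a\star\mathbf{1})(5)=-a_1-a_5=-1$ as required.

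Be aware that the statement as printed carries the same spurious $\log m$ in its two Type~I sums (evidently inherited from the $\Lambda$-version), so it is false as it stands: taking $F\equiv0$, $U=2$, $R=4$, $R_1=8$ gives $-1$ on the left-hand side and $-\log(3360/144)\approx-3.15$ on the right. The correct statement simply deletes $\log m$; this is harmless downstream, since in Lemma~\ref{le:mu_square} the logarithm is removed by partial summation or normalisation anyway. Your remaining bookkeeping --- the supports of $a$ and $b$, the observation that $n>R\geqslant U^{2}$ kills the short term $2G$, and the ranges $U<n\leqslant R_1/U$ and $\max\left(U,R/n\right)<m\leqslant R_1/n$ in the Type~II sum --- is correct and would finish the proof once the identity is stated and proved in its $\log$-free form.
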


\noindent
The usual Dirichlet hyperbola principle, a proof of which can be found for instance in \cite[Theorem~2.4.1]{mur}, can be slightly extended to the following form. The proof is well-known.

\begin{lem}[Dirichlet hyperbola principle]
\label{le:Dirichlet_hyperbola}
Let $f,g: \Z_{\geqslant 1} \to \C$ be two arithmetic functions and $h : \left[ 1,\infty \right) \to \C$ be any map. For all $1 \leqslant U \leqslant x$
\begin{multline*}
   \sum_{n \leqslant x} \left( f \star g \right)(n) h(n) = \sum_{n \leqslant U} f(n) \sum_{m \leqslant x/n} g(m) h(mn) \\
   + \sum_{n \leqslant x/U} g(n) \sum_{m \leqslant x/n} f(m) h (mn) - \sum_{n \leqslant U} \sum_{m \leqslant x/U} f(n)g(m) h (mn).
\end{multline*}
\end{lem}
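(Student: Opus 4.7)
The plan is to unfold the Dirichlet convolution and then partition the underlying hyperbolic summation region using the parameter $U$. First I would write
$$\sum_{n \leqslant x} (f \star g)(n) h(n) = \sum_{\substack{d,m \geqslant 1 \\ dm \leqslant x}} f(d) g(m) h(dm),$$
so that the task reduces to summing the function $(d,m) \mapsto f(d) g(m) h(dm)$ over the lattice points lying beneath the hyperbola $dm = x$.

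Next I would decompose this region as the union of the two overlapping strips
$$A = \{(d,m) : d \leqslant U, \; dm \leqslant x\} \quad \text{and} \quad B = \{(d,m) : m \leqslant x/U, \; dm \leqslant x\}.$$
Every pair $(d,m)$ with $dm \leqslant x$ either has $d \leqslant U$, in which case it lies in $A$, or else $d > U$, which forces $m \leqslant x/d < x/U$ and places it in $B$; hence $A \cup B$ covers the whole region. Moreover, the intersection $A \cap B$ is exactly the full rectangle $\{(d,m) : d \leqslant U, \; m \leqslant x/U\}$, since the constraint $dm \leqslant x$ is then automatic from $dm \leqslant U \cdot (x/U) = x$.

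The last step is the inclusion-exclusion identity $\sum_{A \cup B} = \sum_A + \sum_B - \sum_{A \cap B}$. Writing each of the three pieces as an iterated sum, pulling out the outer variable and relabelling it as $n$, matches precisely the three terms on the right-hand side of the lemma: the first piece gives the $f$-outer sum truncated at $U$, the second gives the $g$-outer sum truncated at $x/U$, and the third subtracts off the rectangular over-count. The only step requiring any genuine attention is the covering claim $A \cup B \supseteq \{(d,m) : dm \leqslant x\}$; once this (and the automatic inequality on $A \cap B$) is verified, the rest is a direct bookkeeping exercise with no analytic input.
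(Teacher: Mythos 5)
Your proof is correct and complete: the unfolding of the convolution, the covering of the hyperbolic region $\{dm \leqslant x\}$ by the two strips, the observation that the intersection is the full rectangle (since $dm \leqslant U \cdot (x/U) = x$ is automatic there), and the inclusion--exclusion step all check out and reproduce the three terms of the lemma exactly. The paper itself omits the argument, stating only that ``the proof is well-known'' and pointing to a reference for the classical case; your argument is precisely that standard proof, so there is nothing to reconcile.
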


\noindent
Specifying $h(n) = e \left( F(n) \right)$ where $F : \left[ 1,\infty \right) \to \left[ 0,\infty \right)$ is any function, we immediately derive the next tool.

\begin{coro}
\label{cor:Dirichlet_exponential_principle}
Let $f,g: \Z_{\geqslant 1} \to \C$ be two arithmetic functions and $F : \left[ 1,\infty \right) \to \left[ 0,\infty \right)$ be any map. For all $R<R_1 \in \Z_{\geqslant 1}$ and $1 \leqslant U \leqslant R$
\begin{multline*}
   \sum_{R < n \leqslant R_1} \left( f \star g \right)(n)  \, e \left( F(n) \right) = \sum_{n \leqslant \frac{UR_1}{R}} f(n) \sum_{\frac{R}{n} < m \leqslant \frac{R_1}{n}} g(m) \, e \left( F(mn) \right) \\
   + \sum_{n \leqslant \frac{R}{U}} g(n) \sum_{\frac{R}{n} < m \leqslant \frac{R_1}{n}} f(m)  \, e \left( F(mn) \right) - \sum_{U < n \leqslant \frac{UR_1}{R}} f(n) \sum_{\frac{R}{n} < m \leqslant \frac{R}{U}} g(m)  \, e \left( F(mn) \right).
\end{multline*}
\end{coro}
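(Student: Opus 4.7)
The plan is to deduce Corollary~\ref{cor:Dirichlet_exponential_principle} directly from Lemma~\ref{le:Dirichlet_hyperbola} by writing the short sum over $R < n \leqslant R_1$ as a difference of two ``full'' sums, each of which is opened by the hyperbola principle. Accordingly, I would set $h(n) = e(F(n))$ and apply Lemma~\ref{le:Dirichlet_hyperbola} twice: first with $x = R_1$ and truncation parameter $U' := UR_1/R$ (which satisfies $1 \leqslant U' \leqslant R_1$ since $U \leqslant R$, and crucially yields $x/U' = R/U$), and then with $x = R$ and truncation parameter $U$ (so that $x/U = R/U$ as well). Subtracting the two identities produces $\sum_{R<n\leqslant R_1}(f\star g)(n) e(F(n))$ as a sum of three differences, one for each of the three pieces supplied by the hyperbola principle.

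The ``$g$-term'' difference is immediate: the outer range $n \leqslant R/U$ is identical in the two applications, so only the inner range widens from $m \leqslant R/n$ to $m \leqslant R_1/n$, producing $\sum_{n\leqslant R/U} g(n)\sum_{R/n<m\leqslant R_1/n} f(m) e(F(mn))$, which is precisely the middle term of the corollary. The ``cross'' difference collapses similarly, because the inner truncation $m \leqslant R/U$ is identical in both applications; only the outer range widens, giving $-\sum_{U<n\leqslant UR_1/R} f(n)\sum_{m\leqslant R/U} g(m) e(F(mn))$.

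The ``$f$-term'' difference is the only one requiring a small manipulation: it equals
\[
   \sum_{n\leqslant U} f(n)\sum_{R/n<m\leqslant R_1/n} g(m) e(F(mn)) + \sum_{U<n\leqslant UR_1/R} f(n)\sum_{m\leqslant R_1/n} g(m) e(F(mn)).
\]
Splitting $\sum_{m\leqslant R_1/n} = \sum_{m\leqslant R/n} + \sum_{R/n<m\leqslant R_1/n}$ in the second summand fuses the short-range pieces into the opening term $\sum_{n\leqslant UR_1/R} f(n)\sum_{R/n<m\leqslant R_1/n} g(m) e(F(mn))$ of the corollary, and the residual contribution $\sum_{U<n\leqslant UR_1/R} f(n)\sum_{m\leqslant R/n} g(m) e(F(mn))$ combines with the cross difference through the identity $\sum_{m\leqslant R/n} - \sum_{m\leqslant R/U} = -\sum_{R/n<m\leqslant R/U}$ (valid because $n > U$ forces $R/n < R/U$) to yield the final term $-\sum_{U<n\leqslant UR_1/R} f(n)\sum_{R/n<m\leqslant R/U} g(m) e(F(mn))$.

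The whole argument is pure bookkeeping, so no genuine obstacle arises; the only non-trivial design choice is the truncation parameter $U' = UR_1/R$ used at level $R_1$, which is exactly what guarantees that the inner tails $m \leqslant R/U$ agree across the two applications of Lemma~\ref{le:Dirichlet_hyperbola} and permits the cross terms to telescope cleanly into a single short-range sum.
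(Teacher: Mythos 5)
Your proof is correct. Both you and the paper obtain the identity by subtracting two instances of Lemma~\ref{le:Dirichlet_hyperbola} (at $x=R_1$ and at $x=R$), but the decompositions differ in one genuine respect: the paper keeps the \emph{same} truncation parameter $U$ in both applications, which leaves behind an extra piece $S_3=\sum_{R/U<n\leqslant R_1/U} g(n)\sum_{m\leqslant R_1/n} f(m)\,e(F(mn))$ with $g$ on the outside; it must then interchange the order of summation in $S_3$ (with the attendant $\min\left(R_1/U,R_1/n\right)$ case analysis) to convert it into an $f$-outer sum before it can partially cancel the cross term $S_4$. Your choice of the unequal parameters $U'=UR_1/R$ at level $R_1$ and $U$ at level $R$ makes the $g$-outer ranges and the cross-term inner ranges coincide from the outset, so that troublesome term never appears and the only remaining manipulation is the splitting of the $f$-ranges, exactly as you describe. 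The two routes are of comparable length, but yours trades the double-sum interchange for a slightly less obvious initial choice of parameter; the one hypothesis your route adds, namely $1\leqslant U'\leqslant R_1$, follows from $U\leqslant R$ as you note, so both arguments are complete and valid.
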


\begin{proof}
By Lemma~\ref{le:Dirichlet_hyperbola} we first derive
\begin{align}
    \sum_{R < n \leqslant R_1} \left( f \star g \right)(n)  \, e \left( F(n) \right) &= \sum_{n \leqslant U} f(n) \sum_{\frac{R}{n} < m \leqslant \frac{R_1}{n}} g(m)  \, e \left( F(mn) \right) + \sum_{n \leqslant \frac{R}{U}} g(n) \sum_{\frac{R}{n} < m \leqslant \frac{R_1}{n}} f(m)  \, e \left( F(mn) \right) \notag \\
   & \qquad + \sum_{\frac{R}{U} < n \leqslant \frac{R_1}{U}} g(n) \sum_{m \leqslant \frac{R_1}{n}} f(m)  \, e \left( F(mn) \right) - \sum_{n \leqslant U} f(n)\sum_{\frac{R}{U} < m \leqslant \frac{R_1}{U}} g(m)  \, e \left( F(mn) \right) \label{eq:hyperbole_exp} \\
   &:= S_1 + S_2 + S_3 - S_4. \notag
\end{align} 
For $S_3$, interchanging the sums and then the indices yields
\begin{align*}
   S_3 &= \sum_{n \leqslant \frac{UR_1}{R}} f(n) \sum_{\frac{R}{U} < m \leqslant \min \left( \frac{R_1}{U},\frac{R_1}{n}\right)} g(m)  \, e \left( F(mn) \right) \\
   &= \sum_{n \leqslant U} f(n) \sum_{\frac{R}{U} < m \leqslant \frac{R_1}{U}} g(m)  \, e \left( F(mn) \right) + \sum_{U < n \leqslant \frac{UR_1}{R}} f(n) \sum_{\frac{R}{U} < m \leqslant \frac{R_1}{n}} g(m)  \, e \left( F(mn) \right) \\
   &= S_4 + \sum_{U < n \leqslant \frac{UR_1}{R}} f(n) \sum_{\frac{R}{U} < m \leqslant \frac{R_1}{n}} g(m)  \, e \left( F(mn) \right)
\end{align*}
and, in the $2$nd sum, since $U < n \leqslant \frac{UR_1}{R}$, we have $\frac{R}{n} < \frac{R}{U} \leqslant \frac{R_1}{n}$, so that
\begin{align*}
   S_3-S_4 &= \sum_{U < n \leqslant \frac{UR_1}{N}} f(n) \sum_{\frac{R}{n} < m \leqslant \frac{R_1}{n}} g(m)  \, e \left( F(mn) \right)-\sum_{U < n \leqslant \frac{UR_1}{R}} f(n) \sum_{\frac{R}{n} < m \leqslant \frac{R}{U}} g(m)  \, e \left( F(mn) \right) \\
   &= \sum_{n \leqslant \frac{UR_1}{R}} f(n) \sum_{\frac{R}{n} < m \leqslant \frac{R_1}{n}} g(m)  \, e \left( F(mn) \right)-S_1 - \sum_{U < n \leqslant \frac{UR_1}{R}} f(n) \sum_{\frac{R}{n} < m \leqslant \frac{R}{U}} g(m)  \, e \left( F(mn) \right)
\end{align*}
implying the asserted result.
\end{proof}
 
\section{The von Mangoldt function}

\noindent
The following result relates certain exponential sums of primes with the sum of Theorem~\ref{th:Lambda}.

\begin{prop}
\label{pro:Lambda}
Assume there exist real numbers $\alpha, \beta > 0$, $0 \leqslant \gamma < 1$ such that $2\alpha + \beta < 1$, $\alpha(\gamma-3) \leqslant \beta - \gamma$, $\alpha(\gamma + 1) + \gamma (\beta - 2) + 1 \geqslant 0$ and, for all $z \geqslant 1$ and all integers $1 < R < R_1 \leqslant 2R$ such that $R \leqslant z^{2/3}$, we have for all $\varepsilon \in \left( 0,\frac{1}{2} \right]$
\begin{equation}
   z^{-\varepsilon} \sum_{R < n \leqslant R_1} \Lambda(n) \, e \left( \frac{z}{n} \right) \ll z^\alpha R^\beta + R^\gamma. \label{eq:sum_primes}
\end{equation}
Then, for $x \geqslant e$ large
$$\sum_{n \leqslant x} \Lambda \left( \left \lfloor \frac{x}{n} \right \rfloor \right) = x \sum_{n=1}^\infty \frac{\Lambda(n)}{n(n+1)} + O_\varepsilon \left( x^{\frac{1+\alpha}{3-\beta} + \varepsilon} \right).$$
\end{prop}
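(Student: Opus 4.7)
My plan is to apply Proposition~\ref{pro:preliminary} with $f = \Lambda$ (which satisfies $\Lambda(n) \ll n^\varepsilon$ trivially), leaving the two parameters $N$ and $H$ to be specified. Guided by the target exponent, I would set $\theta := (1+\alpha)/(3-\beta)$ and choose $N \asymp x^\theta$ together with $H \asymp x^{1-2\theta}$. The hypothesis $2\alpha+\beta < 1$ forces $\theta < 1/2$, while $\theta \geqslant 1/3$ follows at once from $3\alpha+\beta \geqslant 0$, so $N$ lies in the admissible window $[x^{1/3}, x^{1/2})$ required by the proposition.

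For each triple $(D,h,a)$ with $N < D \leqslant x/N$, $1 \leqslant h \leqslant H$ and $a \in \{0,1\}$, I would apply the hypothesis~\eqref{eq:sum_primes} with $z = hx$ and $R = D$. The condition $R \leqslant z^{2/3}$ becomes $D \leqslant (hx)^{2/3}$, which holds for every $h \geqslant 1$ since $D \leqslant x/N \leqslant x^{2/3}$. The shift $a = 1$ can be dealt with either by appealing to the insensitivity of the estimate~\eqref{eq:sum_primes} to a bounded translation in the phase, or via the expansion $e(hx/(d+1)) = e(hx/d)\, e(-hx/(d(d+1)))$ in which the secondary phase is uniformly $O(1)$ on the range in question. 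Summing over $h$ using $\sum_{h \leqslant H} h^{\alpha-1} \ll H^\alpha$ produces the contribution $\ll x^\varepsilon\bigl(x^\alpha H^\alpha D^\beta + D^\gamma\bigr)$ from the exponential sums.

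The problem is thereby reduced to estimating
$$N x^\varepsilon + x^\varepsilon \max_{N < D \leqslant x/N} \left( \frac{D}{H} + x^\alpha H^\alpha D^\beta + D^\gamma \right).$$
With $H$ fixed, each of the three summands inside the maximum is non-decreasing in $D$, so the maximum is attained at $D = x/N = x^{1-\theta}$. Substituting the chosen values, the three contributions become $D/H = x^\theta$ (exact equality); $x^\alpha H^\alpha D^\beta = x^{(1-\theta)(2\alpha+\beta)}$, which is $\leqslant x^\theta$ iff $(1-\alpha-\beta)(1-2\alpha-\beta) \geqslant 0$, a consequence of $2\alpha+\beta<1$ and the induced inequality $\alpha+\beta<1$; and finally $D^\gamma = x^{\gamma(1-\theta)}$, which is $\leqslant x^\theta$ precisely when $\alpha(\gamma+1) + \gamma(\beta-2) + 1 \geqslant 0$, the third hypothesis of the proposition. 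Combined with $N \ll x^\theta$, this gives the announced error term $O_\varepsilon\bigl(x^{\theta+\varepsilon}\bigr)$.

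The main obstacle is the clean bookkeeping of the three algebraic constraints on $(\alpha,\beta,\gamma)$ and identifying which of them controls which term of the optimization. The auxiliary condition $\alpha(\gamma-3) \leqslant \beta - \gamma$ is not needed for the balance at $D = x/N$ described above; I expect it enters to rule out a competing optimum in which $D^\gamma$ dominates $x^\alpha H^\alpha D^\beta$ at an intermediate scale of $D$, or to ensure that the clean formula $\theta = (1+\alpha)/(3-\beta)$ is what emerges rather than the sharper but less tractable $(2\alpha+\beta)/(1+2\alpha+\beta)$ obtained by a fully optimized balance of $H$ against $D/H$. Treating the shift $a=1$ rigorously without incurring a parasitic loss is a further, but standard, technical point.
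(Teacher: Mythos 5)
Your overall skeleton (Proposition~\ref{pro:preliminary}, then the hypothesis \eqref{eq:sum_primes} with $z=hx$ and $R=D$, then optimisation in $H$, $D$ and $N$) matches the paper, but there is a genuine gap in your treatment of the shift $a=1$, and it is precisely the point that determines the exponent. The secondary phase $hx/(d(d+1))$ is of size $\asymp hx/D^{2}$ on $d\in(D,2D]$, and this is \emph{not} $O(1)$: for $D$ near $N\asymp x^{\theta}$ with $\theta<\tfrac12$ it is already at least $x^{1-2\theta}$ for $h=1$, and as large as $x^{2-4\theta}$ for $h\asymp H$. Nor can you ``translate'' the hypothesis, since the weight is $\Lambda(d)$ and not $\Lambda(d+1)$. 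The paper removes the factor $e(-hx/(d(d+1)))$ by Abel summation at the cost of a factor $1+hx/D^{2}$, so the inner sums are bounded by $(hx)^{1+\alpha}D^{\beta-2}+(hx)^{\alpha}D^{\beta}+hxD^{\gamma-2}+D^{\gamma}$, not just by the last two terms. The two extra terms are decreasing in $D$, hence largest at $D\asymp N$, and the requirement $x^{1+\alpha}N^{\beta-2}\leqslant N$ is exactly the constraint that forces $N=x^{(1+\alpha)/(3-\beta)}$; likewise $xN^{\gamma-2}\leqslant N$ is, with this choice of $N$, equivalent to the hypothesis $\alpha(\gamma-3)\leqslant\beta-\gamma$ that you could not place. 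Your computation, which keeps only $D/H+x^{\alpha}H^{\alpha}D^{\beta}+D^{\gamma}$, would (as you yourself observe) prove the stronger exponent $(2\alpha+\beta)/(1+2\alpha+\beta)$ --- which is exactly what the paper obtains in Proposition~\ref{pro:tau_r} for $\tau_{r}$, where the hypothesis \eqref{eq:sum_primes_bis} is assumed for both phases $z/n$ and $z/(n+1)$; for $\Lambda$ no such two-phase hypothesis is available, and the weaker exponent $(1+\alpha)/(3-\beta)$ is the price of the shift.

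A second, related defect: once the terms $(Hx)^{1+\alpha}D^{\beta-2}$ and $HxD^{\gamma-2}$ are present, your single global choice $H\asymp x^{1-2\theta}$ no longer works. At $D\asymp N$ one gets $(Hx)^{1+\alpha}N^{\beta-2}=x^{(2-2\theta)(1+\alpha)+\theta(\beta-2)}$, and requiring this to be $\leqslant x^{\theta}$ with $\theta=(1+\alpha)/(3-\beta)$ reduces to $2\alpha+\beta\geqslant 1$, the opposite of the standing assumption. This is why the paper applies Srinivasan's optimisation lemma to $H$ \emph{inside} the maximum over $D$, i.e.\ takes $H=H(D)$, producing the additional terms $\left(x^{1+\alpha}D^{\alpha+\beta-1}\right)^{1/(\alpha+2)}$ and $\left(x^{\alpha}D^{\alpha+\beta}\right)^{1/(\alpha+1)}$ before maximising over $D$ and choosing $N$. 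To repair your argument you need both the factor $1+hx/D^{2}$ and a $D$-dependent choice of $H$; the parts of your optimisation that you did carry out (the verification that $x^{(1-\theta)(2\alpha+\beta)}\leqslant x^{\theta}$ follows from $2\alpha+\beta<1$, and that $x^{\gamma(1-\theta)}\leqslant x^{\theta}$ is the third hypothesis) are correct and reappear unchanged in the full argument.
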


\begin{proof}
By Proposition~\ref{pro:preliminary}, it suffices to estimate
$$\sum_{D < d \leqslant 2D} \Lambda(d) e \left( \frac{hx}{d+a} \right)$$
where $a \in \{0,1\}$ and for all $x^{1/3} \leqslant N < x^{1/2}$, $N < D \leqslant xN^{-1}$, $H \in \Z_{\geqslant 1}$ and $1 \leqslant h \leqslant H$. Note that $\frac{x}{d+1} = \frac{x}{d} - \frac{x}{d(d+1)}$, so that, by Abel summation, we get
\begin{align*}
   \sum_{D < d \leqslant 2D} \Lambda(d) e \left( \frac{hx}{d+1} \right) & = \sum_{D < d \leqslant 2D} \Lambda(d) \, e \left( \frac{hx}{d} \right) \, e \left( - \frac{hx}{d(d+1)} \right) \\
   & \ll \left( 1 + \frac{hx}{D^2} \right)  \max_{D \leqslant D_1 \leqslant 2D} \left| \sum_{D < d \leqslant D_1} \Lambda(d) \, e \left( \frac{hx}{d} \right) \right | \\
   & \ll (hx)^\varepsilon \left\lbrace (hx)^{1+\alpha} D^{\beta - 2} + (hx)^\alpha D^\beta + hx D^{\gamma - 2} + D^\gamma \right\rbrace 
\end{align*}
where we used \eqref{eq:sum_primes} assuming also $D \leqslant x^{2/3}$, and therefore
\begin{multline*}
   (Hx)^{-\varepsilon} \left\lbrace \frac{D}{H} + \sum_{h \leqslant H} \frac{1}{h} \sum_{a=0}^1 \left| \sum_{D < d \leqslant 2D} \Lambda(d) e \left( \frac{hx}{d+a}\right) \right | \right\rbrace \\
   \ll \frac{D}{H} + (Hx)^{1+\alpha} D^{\beta - 2} + (Hx)^\alpha D^\beta + Hx D^{\gamma - 2} + D^\gamma
\end{multline*}
provided that $H \geqslant 1$ and $N <  D \leqslant \min \left( xN^{-1},x^{2/3} \right) = xN^{-1}$, since $N \geqslant x^{1/3}$. Using Srinivasan optimization lemma on the parameter $H$, we derive
\begin{multline*}
   x^{-\varepsilon} \left( \frac{D}{H} + \sum_{h \leqslant H} \frac{1}{h} \sum_{a=0}^1 \left| \sum_{D < d \leqslant 2D} \Lambda(d) e \left( \frac{hx}{d+a}\right) \right | \right) \ll \left( x^{1+\alpha} D^{\alpha + \beta - 1} \right)^{\frac{1}{\alpha+2}} + x^{1+\alpha} D^{\beta - 2} \\
  + \left( x^\alpha D^{\alpha + \beta} \right)^{\frac{1}{\alpha+1}} + x^\alpha D^\beta + x^{1/2} D^{\frac{\gamma - 1}{2}} + x D^{\gamma - 2} +  D^\gamma.
\end{multline*}
Hence the error term of Proposition~\ref{pro:preliminary} is, for all $x^{1/3} \leqslant N < x^{1/2}$ and up to $x^{\varepsilon}$
\begin{multline*}
   \ll N + \left( x^{1+\alpha} N^{\alpha + \beta - 1} \right)^{\frac{1}{\alpha+2}} + x^{1+\alpha} N^{\beta - 2} \\
   + \left( x^{2 \alpha + \beta} N^{-\alpha - \beta} \right)^{\frac{1}{\alpha+1}} + x^{\alpha+ \beta} N^{- \beta} + x^{1/2} N^{\frac{\gamma - 1}{2}} + x N^{\gamma - 2} + \left( \frac{x}{N} \right)^\gamma.
\end{multline*}
Now choose $N = x^{\frac{1+\alpha}{3-\beta}}$. Note that the condition $2 \alpha + \beta < 1$ entails that $\frac{1+\alpha}{3-\beta} < \frac{1}{2}$, and clearly $\frac{1+\alpha}{3-\beta} > \frac{1}{3}$. We obtain that the error term is, up to $x^{\varepsilon}$
$$\ll x^{\frac{1+\alpha}{3-\beta}} + x^{\frac{\alpha^2+\alpha(3\beta-5)-\beta (2-\beta)}{(1-\alpha)(3-\beta)}} + x^{\frac{\alpha (3-2\beta) + \beta(2-\beta)}{3-\beta}} + x^{\frac{\alpha (\gamma-1)-\beta + \gamma + 2}{2(3-\beta)}} + x^{\frac{\alpha (\gamma-2) - \beta + \gamma + 1}{2(\alpha+2)(3-\beta)}} + x^{\frac{\gamma(2 - \alpha - \beta)}{3-\beta}}$$
and note that the conditions $2\alpha + \beta < 1$, $\alpha(\gamma-3) \leqslant \beta - \gamma$ and $\alpha(\gamma + 1) + \gamma (\beta - 2) + 1 \geqslant 0$ imply that the $1$st term dominates the other terms, completing the proof.
\end{proof}

\noindent
Bounds for the sum \eqref{eq:sum_primes} do already exist in the literature. For instance, in \cite[Theorem~9]{grar}, the authors proved that
$$\sum_{R < n \leqslant 2R} \Lambda(n) \, e \left( \frac{z}{n} \right) \ll z^{1/12} R^{19/24} (\log R)^{11/4}$$
provided that $1 \leqslant R \leqslant \frac{1}{5} z^{3/5}$, so that Proposition~\ref{pro:Lambda} used with $(\alpha,\beta,\gamma) = \left( \frac{1}{12},\frac{19}{24},0 \right)$ yields
$$\sum_{n \leqslant x} \Lambda \left( \left \lfloor \frac{x}{n} \right \rfloor \right) = x \sum_{n=1}^\infty \frac{\Lambda(n)}{n(n+1)} + O_\varepsilon \left( x^{26/53 + \varepsilon} \right)$$
which is slightly better than \eqref{eq:Ma_Wu}. The next result is a consequence of Proposition~\ref{pro:Vaughan_identity}.

\begin{prop}
\label{pro:Lambda_estimate}
Let $1 < R < R_1 \leqslant 2R$ be large integers and $F : \left[ 1,\infty \right) \to \left[ 0,\infty \right)$ be any map. Assume there exist $A,B >0 $ such that, for all integers $M,N \geqslant 1$
\begin{align}
   & \sum_{N < n \leqslant 2N} \underset{\frac{R}{n} < M \leqslant \frac{R_1}{n}}{\max} \left | \sum_{\frac{R}{n} < m \leqslant M} e \left( F(mn) \right) \right | \leqslant A \quad \text{for\ } N \leqslant R^{1/3} \label{eq:type_I} \\
   & \max_{\substack{M \geqslant 1 \\ MN \asymp R}}\left | \sum_{N < n \leqslant 2N} \alpha_n \sum_{M < m \leqslant 2M} \beta_m \, e \left( F(mn) \right) \right | \leqslant B \quad \text{for\ } R^{1/2} \leqslant N \leqslant 2R^{2/3} \label{eq:type_II}
\end{align}
uniformly for all complex-valued sequences $(\alpha_n)$ and $(\beta_m)$ satisfying $|\alpha_n| \leqslant 1$ and $|\beta_m| \leqslant 1$. Then
$$\sum_{R < n \leqslant R_1} \Lambda(n) \, e \left( F(n) \right) \ll \left( A + B \right) R^\varepsilon.$$
\end{prop}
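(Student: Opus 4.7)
The plan is to invoke Vaughan's identity from Proposition~\ref{pro:Vaughan_identity} with the specific choice $U = R^{1/3}$. This is chosen so that the three resulting sums $\Sigma_1$, $\Sigma_2$, $\Sigma_3$ have outer variables lying respectively in $[1,R^{1/3}]$, $[1,R^{2/3}]$ and $(R^{1/3},2R^{2/3}]$, which match neatly the Type I range $N\leqslant R^{1/3}$ of hypothesis \eqref{eq:type_I} and the Type II range $R^{1/2}\leqslant N\leqslant 2R^{2/3}$ of hypothesis \eqref{eq:type_II}. All coefficients appearing, namely $\mu(n)$, $a_n=(\mu\mathbf{1}_U^-\star\Lambda\mathbf{1}_U^-)(n)$, $b_m=(\mu\mathbf{1}_U^-\star\mathbf{1})(m)$ and $\Lambda(n)$, are bounded by divisor-like quantities and therefore satisfy the pointwise bound $\ll R^\varepsilon$ on $[1,R]$; they can thus be normalised to sequences with modulus $\leqslant 1$ at the cost of a factor $R^\varepsilon$.

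For $\Sigma_1$, I would first strip the factor $\log m$ by partial summation in $m$, which costs $O(\log R)$ and replaces the inner sum by a maximum of exactly the form appearing in \eqref{eq:type_I}. A dyadic decomposition of $n$ in $[1,R^{1/3}]$ then produces $O(\log R)$ blocks each controlled by $A$, so $\Sigma_1\ll A\,R^\varepsilon$. The sum $\Sigma_2$ is split at the threshold $n=R^{1/3}$: the range $n\leqslant R^{1/3}$ is handled exactly as $\Sigma_1$ (without the logarithm) and yields $\ll A\,R^\varepsilon$ via \eqref{eq:type_I}, while the tail $R^{1/3}<n\leqslant R^{2/3}$ is merged with $\Sigma_3$ for a Type II treatment.

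For the combined Type II contribution, I would dyadically decompose in $n$, so that each block $N<n\leqslant 2N$ satisfies $R^{1/3}<N\leqslant 2R^{2/3}$ and forces the inner variable to lie in a single dyadic range $M<m\leqslant 2M$ with $MN\asymp R$ (thanks to $R_1\leqslant 2R$). If $N\geqslant R^{1/2}$, hypothesis \eqref{eq:type_II} applies directly after normalisation, giving $\ll B\,R^\varepsilon$. If $R^{1/3}<N<R^{1/2}$, then $M\geqslant R^{1/2}$, and I would swap the roles of $n$ and $m$, which is legitimate since $e(F(mn))=e(F(nm))$, so that \eqref{eq:type_II} again applies with the longer variable outside. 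Summing the $O(\log R)$ dyadic contributions and combining with the estimates for $\Sigma_1$ and the Type I part of $\Sigma_2$ yields the stated bound $\ll (A+B)\,R^\varepsilon$.

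The main book-keeping obstacle, and essentially the only delicate point, is the reduction of the sharp summation conditions $R/n<m\leqslant R_1/n$ in Vaughan's decomposition to the clean dyadic range $M<m\leqslant 2M$ demanded by \eqref{eq:type_II}, together with the analogous removal of intermediate endpoints implicit in the $\max$ over $M$ in \eqref{eq:type_I}. Both can be handled by a further round of Abel summation, which costs only an additional $O(\log R)$ factor that is comfortably absorbed into $R^\varepsilon$; the rest is routine assembly.
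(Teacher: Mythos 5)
Your proposal is correct and follows essentially the same route as the paper: Vaughan's identity with $U=R^{1/3}$, normalisation of the divisor-bounded coefficients, partial summation to strip the logarithm before applying \eqref{eq:type_I}, and an interchange of the two variables (your ``swap'') to bring the outer range into $[R^{1/2},2R^{2/3}]$ before applying \eqref{eq:type_II}. The delicate point you flag --- reducing the $n$-dependent summation conditions to clean dyadic blocks at the cost of a power of $\log R$ --- is handled just as tersely in the paper (via the standard separation-of-variables device rather than Abel summation, but with the same logarithmic cost), so there is no substantive difference.
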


\begin{proof}
Set $S_1$, $S_2$ and $S_3$ the three sums of Proposition~\ref{pro:Vaughan_identity}, where we choose $U=R^{1/3}$ and we used the normalized coefficients $\alpha_n := a_n/\log R$ and $\beta_m :=b_m 2^{- \omega(m)}$, so that $\left|\alpha_n \right| \leqslant 1$ and $\left| \beta_m \right| \leqslant 1$. Using partial summation and \eqref{eq:type_I}, we derive
$$S_1 \ll \max_{N \leqslant R^{1/3}} \sum_{N < n \leqslant 2N} \underset{\frac{R}{n} < M \leqslant \frac{R_1}{n}}{\max} \left | \sum_{\frac{R}{n} < m \leqslant M} e \left( F(mn) \right) \right | (\log R)^2 \ll A (\log R)^2.$$
We split $S_2$ into three subsums, namely
$$(\log R)^{-1} S_2 = \left( \sum_{n \leqslant R^{1/3}} + \sum_{R^{1/3}< n \leqslant R^{1/2}} + \sum_{R^{1/2} < n \leqslant R^{2/3}} \right) \alpha_n \sum_{\frac{R}{n} < m \leqslant \frac{R_1}{n}}  e \left( F(mn) \right) := S_{21}+S_{22}+S_{23}.$$
As for $S_1$, we immediately derive $S_{21} \ll A (\log R)^2$. Inverting the summations in $S_{22}$ and using \eqref{eq:type_II} yields
\begin{align*}
   S_{22} &= \sum_{R^{1/2}< n \leqslant R_1 R^{-1/3}} \ \sum_{\max \left( R^{1/3}, \frac{R}{n} \right) < m \leqslant \min \left( R^{1/2} , \frac{R_1}{n} \right) } \alpha_m  \, e \left( F(mn) \right) \log R \\
   & \ll \max_{R^{1/2}< N \leqslant 2R^{2/3}} \ \max_{\substack{M \geqslant 1 \\ MN \asymp R}}\left| \sum_{N < n \leqslant 2N} \sum_{\substack{M < m \leqslant 2M \\ m \in I_n}} \alpha_m \, e \left( F(mn) \right)\right| (\log R)^3 \\
   & \ll \max_{R^{1/2}< N \leqslant 2R^{2/3}} \ \max_{\substack{M \geqslant 1 \\ MN \asymp R}}\left| \sum_{N < n \leqslant 2N} \sum_{M < m \leqslant 2M} \alpha_m \, e \left( F(mn) \right)\right| (\log R)^4 \ll B(\log R)^4
\end{align*}
where $I_n$ is any subinterval of $\left( \frac{R}{n}, \frac{2R}{n} \right]$. Also, 
\begin{align*}   
   S_{23} & \ll \max_{R^{1/2}< N \leqslant R^{2/3}} \ \max_{\substack{M \geqslant 1 \\ MN \asymp R}}\left| \underset{R < mn \leqslant R_1}{\sum_{N < n \leqslant 2N} \alpha_n \, \sum_{M < m \leqslant 2M}} e \left( F(mn) \right)\right| (\log R)^3 \\
   & \ll \max_{R^{1/2}< N \leqslant R^{2/3}} \ \max_{\substack{M \geqslant 1 \\ MN \asymp R}}\left| \sum_{N < n \leqslant 2N} \alpha_n \, \sum_{M < m \leqslant 2M}  e \left( F(mn) \right)\right| (\log R)^4 \ll B(\log R)^4.
\end{align*}
Finally, the sum $S_3$ is splitted in two subsums, namely
$$S_3 = \left( \sum_{R^{1/3}< n \leqslant R^{1/2}} + \sum_{R^{1/2} < n \leqslant R_1 R^{-1/3}} \right) \Lambda(n) \sum_{\frac{R}{n} < m \leqslant \frac{R_1}{n}} b_m \, e \left( F(mn) \right)$$
and each of these subsums are treated similarly as for $S_{22}$ and $S_{23}$. The proof is complete.
\end{proof}

\noindent
To estimate the sum \eqref{eq:type_II}, we make appeal to the next result which is picked up from \cite[Theorem~6]{bak07}\footnote{Note that Baker's results in \cite{bak07} are stated with an extra multiplicative condition $R < mn \leqslant R_1$ with $1 < R < R_1 \leqslant 2R$, but the author removes it at the start of the proof at the cost of a factor $\log R$.}.

\begin{prop}[Baker]
\label{pro:Baker_II}
Let $X>0$, $R>1$, $M,N \geqslant 1$ such that $MN \asymp R$, $M \ll N$ and $M\ll X$, $\left(a_n\right) ,\left( b_m\right) \in \C$ such that $\left| a_n \right|,\left| b_m\right| \leqslant 1$, $\alpha ,\beta \in \R$ such that $\alpha \neq 1$, $\beta <0$ et $\alpha +\beta <2$. Set $\mathcal{L}:= \log(RX+2)$. If $\left( k, \ell \right) $ is an exponent pair, then
\begin{multline*}
\mathcal{L}^{-2} \sum_{N < n \leqslant 2N} a_n \sum_{M < m \leqslant 2M} b_m \, e\left( X\left( \frac mM\right)^{\alpha} \left( \frac nN\right) ^{\beta} \right) \\
\ll X^{1/6} \left( R^{5k+4}M^{\ell-k}\right)^{\frac 1{6 \left(k+1\right)}} + R \left( M^{-1/2} + N^{-1/4} \right).
\end{multline*}
\end{prop}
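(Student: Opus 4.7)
The plan is to establish this Type II estimate by combining the Cauchy-Schwarz inequality with van der Corput's method (either an $A$-process or a Poisson summation step) and the hypothesized exponent pair $(k,\ell)$ applied to the resulting one-dimensional exponential sums.

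First, I would apply Cauchy-Schwarz to the outer $n$-sum to remove the coefficients $a_n$, yielding
$$|S|^2 \leq N \sum_{M < m_1, m_2 \leq 2M} b_{m_1} \overline{b_{m_2}} \sum_{N < n \leq 2N} e\!\left(X \sigma(m_1,m_2)\, (n/N)^\beta\right),$$
where $\sigma(m_1,m_2) := (m_1/M)^\alpha - (m_2/M)^\alpha$. The diagonal contribution $m_1 = m_2$ is $\ll MN^2$, whose square root produces the error term $RM^{-1/2}$ of the stated bound.

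For the off-diagonal contribution, parametrized by $r = m_1 - m_2 \neq 0$, the hypothesis $\alpha \neq 1$ and the range of $m_1, m_2$ ensure that $|\sigma| \asymp r/M$ uniformly, so that the inner $n$-sum has phase derivative of size $\asymp Xr/R$ at $n \asymp N$. A naive application of the exponent pair to this inner sum would only yield an $X^{k/2}$ factor; to reach the universal $X^{1/6}$ of the stated bound, one must insert a further van der Corput transformation in $n$ --- either a Weyl differencing step (with an auxiliary parameter $Q$) that passes from the first to the second derivative of the phase, or a Poisson summation step that passes to the dual frequency variable --- before invoking the exponent pair $(k,\ell)$. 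The conditions $\beta < 0$ and $\alpha + \beta < 2$ guarantee non-degeneracy and monotonicity of the phase required for these transformations, and $M \ll X$ ensures that the relevant ranges are nonempty.

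Finally, I would balance the trivial error of the van der Corput step (responsible for the $RN^{-1/4}$ term) against the exponent-pair contribution, use the first-derivative test to handle the small-$r$ regime where the exponent pair is ineffective, and optimize the auxiliary parameters. Summing everything and taking square roots should yield the main term $X^{1/6}(R^{5k+4}M^{\ell-k})^{1/(6(k+1))}$. The main obstacle is tracking all exponents precisely: the fact that the $X^{1/6}$ factor is independent of $(k,\ell)$ signals a delicate alignment between the van der Corput parameter and the exponent-pair structure, and reproducing the exact numerical exponents $5k+4$, $\ell-k$, and $1/(6(k+1))$ will require careful bookkeeping through the optimization.
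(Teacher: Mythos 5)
This proposition is not proved in the paper at all: it is imported verbatim from Baker \cite[Theorem~6]{bak07}, with only a footnote explaining that Baker's extra multiplicative constraint $R<mn\leqslant R_1$ is removed at the cost of a factor $\log R$. So your proposal is not competing against an in-paper argument but against Baker's original proof, and the economical course of action here is simply to cite that theorem.

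Judged on its own terms, your outline has the right general shape for a Type II estimate of this kind --- Cauchy--Schwarz in $n$ to eliminate $a_n$, a diagonal contribution whose square root correctly accounts for the $RM^{-1/2}$ term, and a further van der Corput transformation before invoking the exponent pair --- but as written it is a plan rather than a proof. The entire quantitative content of the statement, namely the main term $X^{1/6}\left( R^{5k+4}M^{\ell-k}\right)^{\frac{1}{6(k+1)}}$ and the $RN^{-1/4}$ term, is never derived: you leave unspecified which transformation is performed (Weyl differencing with which parameter, or Poisson summation in which variable), to which variable the exponent pair is ultimately applied, and how the optimization produces the characteristic exponent $\frac{1}{6(k+1)}$. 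You concede that ``reproducing the exact numerical exponents will require careful bookkeeping,'' but that bookkeeping \emph{is} the proof. Two concrete warning signs that your sketch is not aligned with the intended argument: first, the factor $M^{\ell-k}$ in the answer is the signature of an exponent pair applied to a sum of length $M$ (the $m$-variable), whereas your off-diagonal analysis sets up the exponent pair on the inner $n$-sum; second, your claim $|\sigma(m_1,m_2)|\asymp |m_1-m_2|/M$ requires $\alpha\neq 0$, while the stated hypothesis is $\alpha\neq 1$ --- the condition one needs for the second derivative in $m$, i.e.\ for a B-process or second-derivative step in the $m$-variable, which suggests the $m$-sum is processed quite differently from what you describe. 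Until the exponents are actually produced, the proposal has a genuine gap.
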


\noindent
Taking an integer $r \geqslant 1$, and applying Proposition~\ref{pro:Baker_II} with $\alpha = \beta = -r$ and $X = z(MN)^{-r}$, we derive the next estimate.

\begin{coro}
\label{cor:Baker_II}
Let $z \geqslant 1$, $r \in \Z_{\geqslant 1}$, $R>1$ such that $R \leqslant z^{\frac{2}{2r+1}}$, $M,N \geqslant 1$ such that $MN \asymp R$ and $R^{1/2} \ll N \ll R^{2/3}$, and let $\left(a_n\right) ,\left( b_m\right) \in \C$ such that $\left| a_n \right|,\left| b_m\right| \leqslant 1$. If $\left( k, \ell \right) $ is an exponent pair and $\mathcal{L}:= \log(z+2)$, then
$$\mathcal{L}^{-2} \sum_{N < n \leqslant 2N} a_n \sum_{M < m \leqslant 2M} b_m \, e\left( \frac{z}{(mn)^r} \right) \ll z^{1/6} R^{\frac{2(4-r)+k(9-2r)+\ell}{12(k+1)}} + R^{7/8}.$$
\end{coro}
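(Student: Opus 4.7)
The plan is to apply Proposition~\ref{pro:Baker_II} directly with the specific choice $\alpha = \beta = -r$ and $X := z(MN)^{-r}$. Under this substitution we have
$$X \left( \frac{m}{M} \right)^{-r} \left( \frac{n}{N} \right)^{-r} = \frac{z}{(MN)^r} \cdot \frac{(MN)^r}{(mn)^r} = \frac{z}{(mn)^r},$$
so the exponential matches the one in the statement.

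I would then verify the four hypotheses of Proposition~\ref{pro:Baker_II}. Since $r\geqslant 1$, the conditions $\alpha\neq 1$, $\beta<0$ and $\alpha+\beta = -2r<2$ are immediate. From $MN \asymp R$ together with $N\gg R^{1/2}$, one gets $M \ll R^{1/2} \ll N$, which yields $M\ll N$. The only nontrivial check is $M\ll X$: since $X \asymp z R^{-r}$ and $M\ll R^{1/2}$, the requirement becomes $R^{1/2} \ll z R^{-r}$, i.e.\ $R^{r+1/2} \ll z$, which is precisely the hypothesis $R \leqslant z^{2/(2r+1)}$. (The replacement of $\log(RX+2)$ by $\mathcal{L}=\log(z+2)$ is harmless since $RX \asymp zR^{1-r} \leqslant z$ for $r\geqslant 1$.)

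Applying Proposition~\ref{pro:Baker_II} gives, up to the factor $\mathcal{L}^2$, a bound of the form
$$X^{1/6}\bigl(R^{5k+4} M^{\ell-k}\bigr)^{1/(6(k+1))} + R\bigl(M^{-1/2}+N^{-1/4}\bigr).$$
For the Type II term, using $M \gg R^{1/3}$ and $N \gg R^{1/2}$ one obtains $R M^{-1/2} \ll R^{5/6}$ and $R N^{-1/4} \ll R^{7/8}$, which gives the stated $R^{7/8}$ contribution. For the main term, I insert $X\asymp zR^{-r}$, which contributes $z^{1/6} R^{-r/6}$, and I bound $M^{\ell-k} \leqslant R^{(\ell-k)/2}$ (permissible since $\ell \geqslant k$ for any standard exponent pair, and $M \leqslant R^{1/2}$). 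A short computation then yields an exponent of $R$ equal to
$$-\frac{r}{6}+\frac{2(5k+4)+(\ell-k)}{12(k+1)}=\frac{-2r(k+1)+9k+8+\ell}{12(k+1)}=\frac{2(4-r)+k(9-2r)+\ell}{12(k+1)},$$
which matches the statement.

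The only step that requires any genuine care is the verification of $M\ll X$, which is the reason for the restriction $R\leqslant z^{2/(2r+1)}$; everything else is bookkeeping from the explicit form of Baker's bound.
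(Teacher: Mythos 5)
Your proposal is correct and follows essentially the same route as the paper: apply Proposition~\ref{pro:Baker_II} with $\alpha=\beta=-r$ and $X=z(MN)^{-r}$, verify $M\ll N$ and $M\ll X$ from $N\gg R^{1/2}$ and $R\leqslant z^{2/(2r+1)}$, and then bound $M^{\ell-k}\ll R^{(\ell-k)/2}$ and $RM^{-1/2}+RN^{-1/4}\ll R^{7/8}$ exactly as in the paper's bookkeeping. Your extra remark justifying the replacement of $\log(RX+2)$ by $\log(z+2)$ is a small point the paper leaves implicit, but it changes nothing of substance.
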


\begin{proof}
Let $S_{II}$ be the sum of the left-hand side. First note that the condition $N \gg R^{1/2}$ entails that $N^2 \gg R \asymp MN$, and hence $M \ll N$. Furthermore, since $R \leqslant z^{\frac{2}{2r+1}}$ and $N \gg R^{1/2}$, we get
$$M^{r+1}N^r \asymp R^{r+1}N^{-1} \ll R^{r+1/2} \leqslant z$$
and therefore $M \ll z (MN)^{-r}$. Proposition~\ref{pro:Baker_II} may be applied with $X = z (MN)^{-r}$, yielding
$$\mathcal{L}^{-2} S_{II} \ll z^{1/6} \left( R^{k(5-r)+4-r} M^{\ell-k} \right)^{\frac{1}{6(k+1)}} + RM^{-1/2} + RN^{-1/4}$$
with $R^{k(5-r)+4-r} M^{\ell-k} \ll R^{\ell + (k+1)(4-r)} N^{k-\ell} \ll R^{\frac{1}{2}(2(4-r)+k(9-2r)+\ell)}$, $RM^{-1/2} \ll (RN)^{1/2} \ll R^{5/6}$ and $RN^{-1/4} \ll R^{7/8}$, and hence
$$\mathcal{L}^{-2} S_{II} \ll z^{1/6} R^{\frac{2(4-r)+k(9-2r)+\ell}{12(k+1)}} + R^{7/8}$$
completing the proof.
\end{proof}

\noindent
We are now in a position to establish the main estimate of this section.

\begin{prop}
\label{pro:Lambda_estimate_resultat}
Let $1 < R < R_1 \leqslant 2R$ and $z \geqslant 8$ large such that $R \leqslant z^{2/3}$. If $(k,\ell)$ is an exponent pair satisfying $k \leqslant \frac{1}{6}$ and $20k^2 + k(23-8\ell) + 2 - 7\ell > 0$, then, for all $\varepsilon \in \left( 0 , \frac{1}{2}\right) $
$$z^{-\varepsilon} \sum_{R < n \leqslant R_1} \Lambda (n) \, e \left( \frac{z}{n} \right) \ll z^{1/6} R^{\frac{7k+\ell+6}{12(k+1)}} + R^{7/8}.$$
\end{prop}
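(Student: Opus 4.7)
The plan is to apply Proposition~\ref{pro:Lambda_estimate} with $F(n) = z/n$, which reduces the problem to controlling the Type I sum in \eqref{eq:type_I} and the Type II sum in \eqref{eq:type_II}. For the Type II sum, the phase $z/(mn)$ fits exactly the framework of Corollary~\ref{cor:Baker_II} with $r=1$, and the hypothesis $R \leqslant z^{2/3}$ is precisely the required $R \leqslant z^{2/(2r+1)}$ in that corollary. This gives at once
$$B \ll z^{\varepsilon} \left( z^{1/6} R^{\frac{7k+\ell+6}{12(k+1)}} + R^{7/8} \right).$$

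For the Type I contribution, I would fix $n$ in a dyadic block $(N, 2N]$ with $N \leqslant R^{1/3}$ and apply the exponent pair $(k,\ell)$ to the smooth phase $m \mapsto z/(mn)$. Since $m \asymp R/n$ on the inner range and the derivative parameter $|F'(m)| \cdot m \asymp z/R$ is $\geqslant 1$ (using $R \leqslant z^{2/3}$), the exponent pair produces
$$\max_{R/n < M \leqslant R_1/n} \left| \sum_{R/n < m \leqslant M} e\bigl( z/(mn) \bigr) \right| \ll (z/R)^k (R/n)^{\ell-k}.$$
Summing over $n \sim N$ and then taking the worst case $N = R^{1/3}$ (which is the maximiser because $k - \ell + 1 > 0$ for any admissible exponent pair), one arrives at
$$A \ll z^{\varepsilon}\, z^k R^{(2\ell - 5k + 1)/3}.$$

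The final step is to verify that $A$ is absorbed by $B$. The condition $k \leqslant \tfrac{1}{6}$ makes the $z$-prefactor $z^{k-1/6}$ in $A/B$ at most $1$, so it suffices to show that the $R$-exponent is nonpositive. A short but essential computation gives
$$\frac{2\ell - 5k + 1}{3} - \frac{7k + \ell + 6}{12(k+1)} = -\,\frac{20k^2 + (23 - 8\ell)k + 2 - 7\ell}{12(k+1)},$$
which is negative precisely under the stated hypothesis $20k^2 + (23-8\ell)k + 2 - 7\ell > 0$. Feeding the resulting bound $A + B \ll z^\varepsilon\bigl(z^{1/6} R^{(7k+\ell+6)/(12(k+1))} + R^{7/8}\bigr)$ back into Proposition~\ref{pro:Lambda_estimate} will conclude the argument.

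The hard part will be the last algebraic identification: the polynomial condition on $(k,\ell)$ is engineered exactly so that the Type I contribution is dominated by the Type II contribution, and recognising that the numerator factors through $12(k+1)$ to produce $20k^2 + (23-8\ell)k + 2 - 7\ell$ is what justifies the hypothesis imposed in the proposition. The exponent pair step and the Type II estimate themselves are essentially plug-and-play given the machinery assembled in the previous sections.
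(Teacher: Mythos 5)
Your proposal follows the paper's proof essentially verbatim: the Type II sums are handled by Corollary~\ref{cor:Baker_II} with $r=1$ (whose hypothesis $R \leqslant z^{2/(2r+1)}$ is exactly $R \leqslant z^{2/3}$), the Type I sums by the exponent pair $(k,\ell)$ giving $z^k R^{(1+2\ell-5k)/3}$, and the same algebraic identity shows the polynomial condition $20k^2+k(23-8\ell)+2-7\ell>0$ together with $k\leqslant\frac16$ forces absorption into the Type II bound. The only (harmless) deviation is that you discard the secondary term $R^2/(nz)$ from the Type I exponent-pair estimate with an inadequate justification, whereas the paper keeps it and absorbs the resulting $R^2z^{-1}\leqslant R^{1/2}$ into $R^{7/8}$ via $R\leqslant z^{2/3}$.
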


\begin{proof}
The sums \eqref{eq:type_II} are treated with Proposition~\ref{cor:Baker_II} with $r=1$. It remains to estimate the sums \eqref{eq:type_I}, for which we apply the exponent pair $(k,\ell)$, yielding
\begin{align*}
   \max_{N \leqslant R^{1/3}} \ \sum_{N < n \leqslant 2N} \underset{\frac{R}{n} < M \leqslant \frac{R_1}{n}}{\max} \left | \sum_{\frac{R}{n} < m \leqslant M} e \left( \frac{z}{mn} \right) \right | & \ll \max_{N \leqslant 2R^{1/3}} \sum_{N < n \leqslant 2N} \left\lbrace \left( \frac{z}{R} \right)^k \left( \frac{R}{n} \right)^{\ell-k} + \frac{R^2}{nz} \right\rbrace  \\
   & \ll \max_{N \leqslant R^{1/3}} \left( z^k R^{\ell-2k} N^{1-\ell+k} + R^2 z^{-1} \right) \\
   & \ll z^k R^{\frac{1+2\ell-5k}{3}} + R^2 z^{-1}.
\end{align*}
and note that $R^2 z^{-1} \leqslant R^{7/8}$ and $z^k R^{\frac{1+2\ell-5k}{3}} \leqslant z^{1/6} R^{\frac{7k+\ell+6}{12(k+1)}}$ since $k \leqslant \frac{1}{6}$ and $20k^2 + k(23-8\ell) + 2 - 7\ell > 0$. The proof is complete.
\end{proof}

\begin{proof}[Proof of Theorem~\ref{th:Lambda}]
The proof consists of a simple verification of the hypotheses of Proposition~\ref{pro:Lambda} with
$$\left( \alpha,\beta,\gamma \right) = \left( \tfrac{1}{6},\tfrac{7k+\ell+6}{12(k+1)},\tfrac{7}{8}\right).$$
The condition $3k+4\ell \geqslant 1$ ensures that $\alpha(\gamma-3) \leqslant \beta - \gamma$.
\end{proof}

\section{The Dirichlet-Piltz divisor functions}

\noindent
We first derive the analog of Proposition~\ref{pro:Lambda} for the function $\tau_r$.

\begin{prop}
\label{pro:tau_r}
Let $r \in \Z_{\geqslant 1}$ fixed, and assume there exist real numbers $\alpha, \beta > 0$ such that $2\alpha + \beta < 1$ and $4\alpha + 2 \beta > 1$ and, for all $z \geqslant 1$ and all integers $1 \leqslant R \leqslant z$, we have for all $\varepsilon \in \left( 0,\frac{1}{2} \right]$
\begin{equation}
   z^{-\varepsilon} \left\lbrace \left| \sum_{R < n \leqslant 2R} \tau_r(n) \, e \left( \frac{z}{n} \right) \right| + \left| \sum_{R < n \leqslant 2R} \tau_r(n) \, e \left( \frac{z}{n+1} \right) \right| \right\rbrace \ll z^\alpha R^\beta + R^2z^{-1} . \label{eq:sum_primes_bis}
\end{equation}
Then, for $x \geqslant e$ large
$$\sum_{n \leqslant x} \tau_r \left( \left \lfloor \frac{x}{n} \right \rfloor \right) = x \sum_{n=1}^\infty \frac{\tau_r(n)}{n(n+1)} + O_\varepsilon \left( x^{\frac{2\alpha+\beta}{2\alpha+\beta+1} + \varepsilon} \right).$$
\end{prop}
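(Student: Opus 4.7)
The plan is to follow closely the template of the proof of Proposition~\ref{pro:Lambda} with a few simplifications specific to the hypothesis on $\tau_r$. I would begin by invoking Proposition~\ref{pro:preliminary} with $f = \tau_r$, which satisfies $\tau_r(n) \ll n^\varepsilon$; this reduces matters to controlling, uniformly in $a \in \{0,1\}$, $H \in \Z_{\geqslant 1}$ and $N < D \leqslant x/N$ (for some parameter $x^{1/3} \leqslant N < x^{1/2}$), the quantity
$$\frac{D}{H} + \sum_{h \leqslant H} \frac{1}{h} \left| \sum_{D < d \leqslant 2D} \tau_r(d) \, e \! \left( \frac{hx}{d+a} \right) \right|.$$
Unlike the Lambda case, no Abel summation step is required: hypothesis~\eqref{eq:sum_primes_bis} already treats both shifts $a = 0$ and $a = 1$ simultaneously. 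Applying it with $z = hx$ and $R = D$ (admissible since $D \leqslant x \leqslant hx$), each inner sum is controlled by $(hx)^\varepsilon \bigl( (hx)^\alpha D^\beta + D^2/(hx) \bigr)$, and summing over $h \leqslant H$ yields, since $\alpha > 0$,
$$\frac{D}{H} + H^\alpha x^\alpha D^\beta + \frac{D^2}{x}$$
up to a factor $x^\varepsilon$.

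Next, I would apply Srinivasan's optimization lemma to the first two terms. The unconstrained optimum is $\asymp x^{\alpha/(\alpha+1)} D^{(\alpha+\beta)/(\alpha+1)}$; the complementary $H = 1$ endpoint $D + x^\alpha D^\beta$ only matters in the small range $D \leqslant x^{\alpha/(1-\beta)}$, where it is dominated by the optimum at the transition point. Since the resulting expression is monotonically increasing in $D$, the maximum over $D \in (N, x/N]$ is achieved at $D = x/N$ and equals
$$x^{(2\alpha+\beta)/(\alpha+1)} \, N^{-(\alpha+\beta)/(\alpha+1)} + \frac{x}{N^2}.$$
Adding the outer $N$ contribution from Proposition~\ref{pro:preliminary} and balancing $N$ with the principal term then forces
$$N = x^{(2\alpha+\beta)/(1+2\alpha+\beta)},$$
which yields exactly the claimed exponent $(2\alpha+\beta)/(2\alpha+\beta+1)$.

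It remains to verify the compatibility of this choice with every implicit constraint. The condition $2\alpha+\beta < 1$ ensures $N < x^{1/2}$; the condition $4\alpha + 2\beta > 1$ ensures $N \geqslant x^{1/3}$, and in particular that $xN^{-2} \leqslant N$, so the secondary term $D^2/x$ is absorbed. A short manipulation using $2\alpha+\beta < 1$ also shows that the inequality $N \leqslant x^{(1-\alpha-\beta)/(1-\beta)}$ holds, i.e. at $D = x/N$ the Srinivasan optimum $H_\ast$ satisfies $H_\ast \geqslant 1$, placing us in the optimized regime. The main obstacle is not any single estimate but rather the careful bookkeeping of the endpoint regimes in Srinivasan's lemma and the several small inequalities that guarantee every spurious term stays dominated by the final $N$.
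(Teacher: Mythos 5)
Your proposal is correct and follows essentially the same route as the paper: reduce via Proposition~\ref{pro:preliminary}, insert hypothesis~\eqref{eq:sum_primes_bis} with $z=hx$, apply Srinivasan's lemma in $H$, evaluate at $D=x/N$, and choose $N=x^{(2\alpha+\beta)/(2\alpha+\beta+1)}$, with the two hypotheses $2\alpha+\beta<1$ and $4\alpha+2\beta>1$ playing exactly the roles you assign them. The only cosmetic difference is that the paper carries the $H=1$ endpoint term $x^{\alpha}D^{\beta}$ through to the end as an extra term $x^{\alpha+\beta}N^{-\beta}$ and absorbs it there, whereas you dispose of it at the transition point; both are valid.
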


\begin{proof}
Let $x^{1/3} \leqslant N < x^{1/2}$. Using \eqref{eq:sum_primes_bis}, we derive
$$(Hx)^{-\varepsilon} \left\lbrace \frac{D}{H} + \sum_{h \leqslant H} \frac{1}{h} \sum_{a=0}^1 \left| \sum_{D < d \leqslant 2D} \tau_r(d) e \left( \frac{hx}{d+a}\right) \right | \right\rbrace \ll \frac{D}{H} + (Hx)^\alpha D^\beta + D^2 x^{-1} $$
for all $H \in \Z_{\geqslant 1}$ and all $N < D \leqslant x/N$. Using Srinivasan optimization lemma on $H$, we get
$$x^{-\varepsilon} \left( \frac{D}{H} + \sum_{h \leqslant H} \frac{1}{h} \sum_{a=0}^1 \left| \sum_{D < d \leqslant 2D} \tau_r(d) e \left( \frac{hx}{d+a}\right) \right | \right) \ll \left( x^\alpha D^{\alpha+\beta} \right)^{\frac{1}{\alpha+1}}  + x^\alpha D^\beta + D^2 x^{-1}$$
and hence the error term does of Proposition~\ref{pro:preliminary} not exceed, up to a factor $x^\varepsilon$
$$\ll N + \left( x^{2\alpha+\beta} N^{-\alpha-\beta} \right)^{\frac{1}{\alpha+1}} + x^{\alpha+\beta} N^{-\beta} + xN^{-2}.$$
Now choosing $N = x^{\frac{2\alpha+\beta}{2\alpha+\beta+1}}$ yields the asserted result plus the extra terms
$$x^{\frac{2\alpha^2+\alpha(\beta+1)+\beta}{2\alpha+\beta+1}}+x^{\frac{1-(2\alpha+\beta)}{2\alpha+\beta+1}}$$
which are absorbed by the term $x^{\frac{2\alpha+\beta}{2\alpha+\beta+1}}$ whenever $2 \alpha + \beta < 1$ and $4\alpha + 2 \beta > 1$. Also note that these two conditions ensure that $x^{1/3} \leqslant N < x^{1/2}$, as required. The proof is complete.
\end{proof}

\noindent
The treatment of the sum \eqref{eq:sum_primes_bis} rests on the next result which can be seen as an extension of the definition of the exponent pairs.

\begin{prop}
\label{pro:exp_sum_divisor}
Let $R,r \in \Z_{\geqslant 1}$ and $F \in C^\infty \left[ R, 2R \right]$, positive-valued, and such that there exists $T > 0$ such that, for all $j \in \Z_{\geqslant 0}$ and all $x \in \left[ R, 2R \right]$, we have $\left | F^{(j)} (x) \right | \asymp TR^{-j}$. If $(k,\ell)$ is an exponent pair, then
$$\sum_{R < n \leqslant 2R} \tau_r(n) \, e \left( F(n) \right) \ll T^k R^{\frac{\ell - k}{r} + 1 - \frac{1}{r}} (\log R)^r + RT^{-1} (\log R)^{r+1}.$$
\end{prop}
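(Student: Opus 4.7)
The plan is to proceed by induction on $r$. For the base case $r=1$, $\tau_1 \equiv \mathbf{1}$ and the sum reduces to a pure exponential sum over $(R,2R]$; the estimate $\ll T^k R^{\ell-k}$ follows directly from the defining property of the exponent pair $(k,\ell)$, while the secondary term $R T^{-1}$ is the standard Kuzmin--Landau/first-derivative-test contribution.

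For the inductive step, the identity $\tau_r = \tau_{r-1} \star \mathbf{1}$ gives
\[
\sum_{R < n \leqslant 2R} \tau_r(n)\, e(F(n)) = \sum_{d} \tau_{r-1}(d) \sum_{R/d < m \leqslant 2R/d} e(F(dm)),
\]
and I would split the outer $d$-sum at a parameter $U$ to be optimized at the end. In the range $d \leqslant U$, the inner sum has length $R/d$ and phase $G_d(m) = F(dm)$ satisfying $|G_d^{(j)}(m)| \asymp T (R/d)^{-j}$, so the exponent pair $(k,\ell)$ yields the individual bound $T^k (R/d)^{\ell-k} + (R/d) T^{-1}$. Weighting by $\tau_{r-1}(d)$ and using the classical estimates $\sum_{d \leqslant U} \tau_{r-1}(d)\, d^{k-\ell} \ll U^{1+k-\ell}(\log U)^{r-2}$ and $\sum_{d \leqslant U} \tau_{r-1}(d)/d \ll (\log U)^{r-1}$ produces a contribution of order $T^k R^{\ell-k} U^{1+k-\ell}(\log R)^{r-2} + (R/T)(\log R)^{r-1}$.

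In the range $d > U$, I would decompose $d$ dyadically into blocks $(D, 2D]$ for $D = U, 2U, \ldots$, swap the order of summation so that $m \asymp R/D$ becomes the outer variable, and apply the induction hypothesis to the inner $d$-sum $\sum_{d} \tau_{r-1}(d)\, e(F(dm))$, regarded as a $\tau_{r-1}$-weighted exponential sum of length $D$ with $|\partial_d^j F(dm)| \asymp T D^{-j}$ on $(D, 2D]$. The condition $R < dm \leqslant 2R$ confines $d$ to a sub-interval of $(D, 2D]$ depending on $m$, which is handled by a routine Abel summation. Summing over the $\asymp R/D$ admissible values of $m$ and then dyadically over $D > U$ --- since $(\ell-k-1)/(r-1) < 0$, the resulting geometric series concentrates at $D = U$ --- gives a contribution of size $R T^k U^{(\ell-k-1)/(r-1)}(\log R)^{r-1} + R T^{-1}(\log R)^{r+1}$.

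Finally, the two $T^k$-terms are balanced by the choice $U = R^{(r-1)/r}$, which makes both assume the common exponent $(\ell-k)/r + 1 - 1/r$ and produces the announced bound after collecting error terms. The main obstacle is organizational rather than conceptual: tracking the logarithmic losses at each step and justifying the induction hypothesis on sub-intervals of dyadic ranges rather than on the full ranges; both are standard but require careful bookkeeping.
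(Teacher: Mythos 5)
Your proposal is correct and follows essentially the same route as the paper: induction on $r$, a hyperbola-type splitting of $\tau_r = \tau_{r-1} \star \mathbf{1}$ at $U = R^{(r-1)/r}$, the exponent pair applied to the unweighted inner sums and the induction hypothesis applied to the long $\tau_{r-1}$-weighted sums, with the same balancing of exponents. The only notable difference is that the paper invokes the exact three-term hyperbola identity of Corollary~\ref{cor:Dirichlet_exponential_principle}, which keeps the inner ranges of the form $\left( R/n, 2R/n \right]$ and so sidesteps the sub-interval bookkeeping that your dyadic decomposition requires (and which you correctly flag as routine).
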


\begin{proof}
We use induction on $r$, the case $r=1$ being the definition of the exponent pairs. Assume the result is true for some $r \geqslant 1$. Using Corollary~\ref{cor:Dirichlet_exponential_principle} with $f = \tau_r$, $g = \mathbf{1}$ and $U = R^{\frac{r}{r+1}}$, we derive
\begin{multline*}
   \sum_{R < n \leqslant 2R} \tau_{r+1}(n) \, e \left( F(n) \right) \ll  \sum_{n \leqslant 2R^{\frac{r}{r+1}}} \tau_r(n)\left | \sum_{\frac{R}{n} < m \leqslant \frac{2R}{n}} e \left( F(mn) \right) \right | \\
   + \sum_{n \leqslant R^{\frac{1}{r+1}}} \left | \sum_{\frac{R}{n} < m \leqslant \frac{2R}{n}} \tau_r(m) \, e \left( F(mn) \right) \right | + \sum_{R^{\frac{r}{r+1}} < n \leqslant 2R^{\frac{r}{r+1}}} \tau_r(n) \left | \sum_{\frac{R}{n} < m \leqslant R^{\frac{1}{r+1}}} e \left( F(mn) \right) \right | 
\end{multline*}
and the induction hypothesis entails that 
\begin{align*}
   \sum_{R < n \leqslant 2R} \tau_{r+1}(n) \, e \left( F(n) \right) & \ll  \sum_{n \leqslant 2R^{\frac{r}{r+1}}} \tau_r(n) \left( T^k R^{\ell-k} n^{k - \ell} + \frac{R}{nT} \right) \\
   & \qquad + \sum_{n \leqslant R^{\frac{1}{r+1}}} \left( T^k R^{\frac{\ell - k}{r} + 1 - \frac{1}{r}} n^{^{- \bigl( \frac{\ell - k}{r} + 1 - \frac{1}{r} \bigr)}} (\log R)^r + \frac{R}{nT} (\log R)^{r+1} \right) \\
   & \qquad + \sum_{R^{\frac{r}{r+1}} < n \leqslant 2R^{\frac{r}{r+1}}} \tau_r(n)\left( T^k R^{\ell-k} n^{k - \ell} + \frac{R}{nT} \right)
\end{align*}
where we used in the $3$rd sum the fact that $\left( \frac{R}{n} \, , \, R^{\frac{1}{r+1}} \right] \subset \left( \frac{R}{n} \, , \, \frac{2R}{n} \right]$. The bound
$$\sum_{n \leqslant z} \frac{\tau_r(n)}{n^\alpha} \ll z^{1- \alpha} (\log z)^{r} \quad \left( 0 \leqslant \alpha \leqslant 1 \right)$$
enables us to derive 
$$\sum_{R < n \leqslant 2R} \tau_{r+1}(n) \, e \left( F(n) \right) \ll T^k R^{\frac{\ell - k}{r+1} + 1 - \frac{1}{r+1}} (\log R)^{r+1} + RT^{-1} (\log R)^{r+2}$$
completing the proof.
\end{proof}

\begin{proof}[Proof of Theorem~\ref{th:Dirichlet-Piltz}]
By Proposition~\ref{pro:exp_sum_divisor}, we obtain
$$z^{-\varepsilon} \left\lbrace \sum_{R < n \leqslant 2R} \tau_r(n) \, e \left( \frac{z}{n} \right) + \sum_{R < n \leqslant 2R} \tau_r(n) \, e \left( \frac{z}{n+1} \right) \right\rbrace \ll z^k R^{\frac{\ell - k}{r} + 1 - \frac{1}{r}-k} + R^2z^{-1}.$$
and the condition $1-\ell > k(r-1)$ ensures that $2\alpha + \beta < 1$ and $4\alpha + 2 \beta > 1$ when $\alpha = k$ and $\beta = \frac{\ell - k}{r} + 1 - \frac{1}{r}-k$. Now the result follows by applying Proposition~\ref{pro:tau_r} with these values of $\alpha$ and $\beta$.
\end{proof}

\section{The functions $\mu_2$ and $2^\omega$}

\noindent
In this section, we will make use of the function $\chi_2$ defined by
$$\chi_2(n) := \begin{cases} \mu(m), & \textrm{if\ } n = m^2 \\ 0, & \textrm{otherwise}. \end{cases}$$
We first need the following lemma.

\begin{lem}
\label{le:mu_square}
Let $1 < R < R_1 \leqslant 2R$, $z \geqslant 1$ and assume $R \leqslant z^{2/5}$. If $(k,\ell)$ is an exponent pair, then, for all $\varepsilon > 0$
$$R^{- \varepsilon} \sum_{R < n \leqslant R_1} \mu(n) \, e \left( \frac{z}{n^2} \right) \ll z^{1/6} R^{\frac{5k+\ell+4}{12(k+1)}} + z^k R^{\frac{2 \ell +1 - 8k}{3}} + R^{7/8}.$$
In particular
\begin{equation}
   R^{- \varepsilon} \sum_{R < n \leqslant R_1} \mu(n) \, e \left( \frac{z}{n^2} \right) \ll z^{1/6} R^{38/97} + R^{7/8}. \label{eq:mu_square}
\end{equation}
\end{lem}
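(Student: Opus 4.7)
The plan is to apply Proposition~\ref{pro:Vaughan_identity_mu} (Vaughan's identity for $\mu$) with $U = R^{1/3}$, decomposing the sum into three pieces: a type~I sum $S_1$ over $n \leqslant U$, an intermediate sum $S_2$ over $U < n \leqslant U^2$, and a type~II sum $S_3$ with both variables exceeding $U$.

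For $S_1$, I would estimate the inner sum $\sum_{R/n < m \leqslant R_1/n} e(z/(mn)^2)$ using the exponent pair $(k,\ell)$. The phase $f(m) = z/(mn)^2$ satisfies $|f^{(j)}(m)| \asymp T M^{-j}$ at $m \asymp M = R/n$, with $T = z/R^2$ conveniently independent of $n$, so the inner sum is $\ll T^k M^{\ell-k} = z^k R^{\ell-3k} n^{k-\ell}$. Summing over $n \leqslant R^{1/3}$ against the divisor-bounded coefficients $a_n$ should then yield the term $z^k R^{(2\ell+1-8k)/3+\varepsilon}$ appearing in the stated bound.

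For $S_2$ and $S_3$, I would appeal to Corollary~\ref{cor:Baker_II} with $r=2$; the standing hypothesis $R \leqslant z^{2/5} = z^{2/(2r+1)}$ is exactly what is required. After a dyadic decomposition and normalising the divisor-type weights so that $|\alpha_n|, |\beta_m| \leqslant 1$ (at the cost of $R^\varepsilon$), each piece becomes a double sum $\sum_{N < n \leqslant 2N} \alpha_n \sum_{M < m \leqslant 2M} \beta_m \, e(z/(mn)^2)$ with $MN \asymp R$ and $M, N > R^{1/3}$; since $MN \asymp R$, at least one of $M, N$ must lie in $[R^{1/2}, R^{2/3}]$. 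Swapping the roles of $m$ and $n$ if necessary, I arrange that $N$ is this variable, whereupon Corollary~\ref{cor:Baker_II} supplies $\ll z^{1/6} R^{(5k+\ell+4)/(12(k+1))} + R^{7/8}$.

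Combining all contributions yields the generic estimate. For the specialisation~\eqref{eq:mu_square}, I take $(k,\ell) = (13/84+\varepsilon, 55/84+\varepsilon)$, so that $(5k+\ell+4)/(12(k+1)) = 38/97$, and the $S_1$ contribution $z^{13/84} R^{5/14}$ is dominated by $z^{1/6} R^{38/97}$ (the ratio is $z^{1/84} R^{47/1358} \geqslant 1$). The main technical point I foresee is verifying that the type~II decomposition fits the hypotheses of Corollary~\ref{cor:Baker_II}, particularly the constraint $M \ll z R^{-2}$ on the inner variable, which follows from $R \leqslant z^{2/5}$ via the chain $M \ll R^{1/2} \leqslant z^{1/5} \leqslant z R^{-2}$.
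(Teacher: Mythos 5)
Your proposal follows the paper's proof essentially verbatim: Vaughan's identity for $\mu$ with $U=R^{1/3}$, the exponent pair $(k,\ell)$ applied to the type I sum to produce $z^k R^{(2\ell+1-8k)/3}$, and Corollary~\ref{cor:Baker_II} with $r=2$ for the bilinear pieces (after interchanging summations so the longer variable lies in $[R^{1/2},R^{2/3}]$), followed by the same specialisation to Bourgain's pair. The only detail you elide is the secondary term $R^3z^{-1}$ coming from the exponent-pair bound in the type I estimate, which is $\leqslant R^{7/8}$ precisely because $R\leqslant z^{2/5}$.
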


\begin{proof}
The proof is similar to that of Proposition~\ref{pro:Lambda_estimate}, so that we only sketch the main details. Let $S_1$, $S_2$ and $S_3$ be the sums of Proposition~\ref{pro:Vaughan_identity_mu} used with $U = R^{1/3}$. By partial summation and using the exponent pair $(k,\ell)$, we get
$$R^{- \varepsilon} S_1 \ll z^k R^{\ell - 3k} \sum_{n \leqslant R^{1/3}} \frac{1}{n^{\ell-k}} + R^3 z^{-1} \ll z^k R^{\frac{2 \ell +1 - 8k}{3}} + R^3 z^{-1}.$$
Splitting $S_2$ into two subsums
$$S_2 = \left( \sum_{R^{1/3} < n \leqslant R^{1/2}} + \sum_{R^{1/2} < n \leqslant R^{2/3}} \right) a_n \sum_{\frac{R}{n} < m \leqslant \frac{R_1}{n}} \log m \, e \left( \frac{z}{(mn)^2} \right) := S_{21} + S_{22}$$ 
we interchange the summations in $S_{21}$, so that
$$S_{21} = \sum_{R^{1/2} < n  \leqslant R_1R^{-1/3}} \log n \sum_{\max \left(R^{1/3} , \frac{R}{n} \right) < m \leqslant \min \left(R^{1/2}, \frac{R_1}{n} \right) }a_m \, e \left( \frac{z}{(mn)^2} \right)$$
so that Corollary~\ref{cor:Baker_II} with $r=2$ yields
\begin{align*}
   R^{- \varepsilon }S_{21} & \ll \max_{R^{1/2} < N  \leqslant 2R^{2/3}} \ \max_{\substack{M \geqslant 1 \\ MN \asymp R}} \left| \sum_{N < n  \leqslant 2N} \mathfrak{l}_n \sum_{M < m \leqslant 2M} \alpha_m \, e \left( \frac{z}{(mn)^2} \right) \right| \\
   & \ll z^{1/6} R^{\frac{5k+\ell+4}{12(k+1)}}  + R^{7/8}
\end{align*}
where $\alpha_m := a_m 2^{-\omega(m)}$ and $\mathfrak{l}_n = \log n / \log R$, and similarly
\begin{align*}
   R^{- \varepsilon }S_{22} & \ll \max_{R^{1/2} < N  \leqslant R^{2/3}} \ \max_{\substack{M \geqslant 1 \\ MN \asymp R}} \left| \sum_{N < n  \leqslant 2N} \alpha_n \sum_{M < m \leqslant 2M} \mathfrak{l}_m \, e \left( \frac{z}{(mn)^2} \right) \right| \\
   & \ll z^{1/6} R^{\frac{5k+\ell+4}{12(k+1)}}  + R^{7/8}.
\end{align*}
The argument is similar for $S_3$, splitting the sum in two
$$S_3 = \left( \sum_{R^{1/3} < n \leqslant R^{1/2}} + \sum_{R^{1/2} < n \leqslant R_1R^{-1/3}} \right) b_n \sum_{\frac{R}{n} < m \leqslant \frac{R_1}{n}} \mu(m) \, e \left( \frac{z}{(mn)^2} \right) := S_{31} + S_{32}$$ 
and estimating $S_{31}$ and $S_{32}$ as $S_{21}$ and $S_{22}$. Also note that $R^3z^{-1} \leqslant R^{7/8}$. The last part of the proposition follows with the use of Bourgain's exponent pair $\left( \frac{13}{84} + \varepsilon, \frac{55}{84} + \varepsilon \right)$, yielding the asserted estimate with an extra term $z^{13/84} R^{5/14}$, which is absorbed by the first one.
\end{proof}

\begin{prop}
\label{pro:squarefree}
Let $1 < R < R_1 \leqslant 2R$, $z \geqslant 1$ such that $R \leqslant z^{7/10}$. Then, for all $\varepsilon > 0$
$$R^{- \varepsilon} \sum_{R < n \leqslant R_1} \mu_2(n) \, e \left( \frac{z}{n} \right) \ll z^{\frac{\np{3497}}{\np{13774}}} R^{\frac{15}{71}}.$$
\end{prop}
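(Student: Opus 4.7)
Since $\mu_2 = \chi_2 \star \mathbf{1}$, the substitution $n = d^2 m$ transforms the target sum into
\[
  S := \sum_{R < n \leq R_1} \mu_2(n)\, e(z/n) = \sum_{R < d^2 m \leq R_1} \mu(d)\, e\!\left(\frac{z}{md^2}\right),
\]
and my plan is to split this double sum at a threshold $d = D$ (to be optimised later) and treat the two pieces by complementary methods.

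For the small-$d$ piece ($d \leq D$), for each fixed $d$ the phase $F(m) = z/(md^2)$ satisfies $|F^{(j)}(m)| \asymp (z/R)(R/d^2)^{-j}$, so an exponent pair $(k, \ell)$ gives $\sum_m e(F(m)) \ll (z/R)^k (R/d^2)^\ell$; summing over $d \leq D$ with $|\mu(d)| \leq 1$ yields a contribution of the order $z^k R^{\ell-k} \min(1, D^{1-2\ell})$. For the large-$d$ piece ($d > D$), I interchange the order of summation: for each $m$, after a dyadic split $d \sim D_0$ with $D_0 \in (D, \sqrt{R_1}]$, the inner Möbius-weighted sum $\sum_d \mu(d)\, e((z/m)/d^2)$ is precisely of the shape handled by Lemma~\ref{le:mu_square} with $w := z/m$ in place of $z$. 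That lemma's hypothesis $D_0 \leq w^{2/5}$ becomes, via $m \leq R/D_0^2$, the condition $D_0 \leq (z/R)^2$, which is guaranteed by $R \leq z^{7/10}$ since then $(z/R)^2 \geq z^{3/5} \geq \sqrt{R_1}$. Applying the ``In particular'' form of Lemma~\ref{le:mu_square} yields $(z/m)^{1/6} D_0^{38/97} + D_0^{7/8}$ per $m$; summing over $m \asymp R/D_0^2$ and over dyadic $D_0 > D$ (both $D_0$-exponents being negative, the geometric series concentrates at $D_0 = D$) collapses this to
\[
  z^{1/6} R^{5/6} D^{-371/291} + R D^{-9/8}.
\]

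Putting the pieces together, $S$ is controlled (up to $R^\varepsilon$) by $z^k R^{\ell-k} \min(1, D^{1-2\ell}) + z^{1/6} R^{5/6} D^{-371/291} + R D^{-9/8}$. The exponent pair and the parameter $D$ are then chosen jointly: using Bourgain's pair $(\tfrac{13}{84}+\varepsilon, \tfrac{55}{84}+\varepsilon)$ in the Type~I step and selecting $D$ so as to equate the small-$d$ contribution with the leading large-$d$ term, a careful arithmetic optimisation (reflected in the factorisation $13774 = 2 \cdot 71 \cdot 97$, involving both Bourgain's denominator and the denominator $97$ coming from its $A$-transform) yields the claimed exponents $\tfrac{3497}{13774}$ on $z$ and $\tfrac{15}{71}$ on $R$.

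The main technical obstacle lies in the small-$d$ estimate: a direct exponent-pair treatment produces an $R$-exponent of $\tfrac12$, strictly larger than the target $\tfrac{15}{71}$. Breaking this $\tfrac12$-barrier requires a finer decomposition -- applying Vaughan's identity for $\mu$ (Proposition~\ref{pro:Vaughan_identity_mu}) to the Möbius factor inside the small-$d$ sum while carrying the summation over $m$ along -- and estimating the resulting Type~II exponential sums, of the form $\sum_n a_n \sum_e \mu(e) \sum_m e(z/(m n^2 e^2))$, via Proposition~\ref{pro:Baker_II} or Corollary~\ref{cor:Baker_II}. Reconciling all the resulting contributions and jointly optimising $D$, the internal Vaughan parameter $U$, and the exponent pair, is what produces the intricate numerator $3497$ and denominator $13774$ in the final bound.
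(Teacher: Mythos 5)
Your first two paragraphs reproduce the paper's proof in all essentials: the paper applies its hyperbola-type Corollary~\ref{cor:Dirichlet_exponential_principle} to $\mu_2=\chi_2\star\mathbf 1$ with a parameter $U$ (your $D^2$), bounds the pieces with a short square divisor by an exponent pair on the free long variable, and bounds the complementary piece by Lemma~\ref{le:mu_square}, exactly as you do. Your large-$d$ bound $z^{1/6}R^{5/6}D^{-371/291}+RD^{-9/8}$ coincides with the paper's $z^{1/6}R^{5/6}U^{-371/582}+RU^{-9/16}$ under $U=D^2$, and your verification that $R\leqslant z^{7/10}$ gives the hypothesis $\sqrt{R/m}\leqslant(z/m)^{2/5}$ of Lemma~\ref{le:mu_square} is the same as the paper's.

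Two corrections are needed, though. First, the small-$d$ bound is mis-stated: with derivative $\asymp zd^2/R^2$ and length $\asymp R/d^2$, the exponent pair gives $(z/R)^k(R/d^2)^{\ell-k}$ per $d$, and summing over $d\leqslant D$ with $(k,\ell)=BA\left(\frac{13}{84}+\varepsilon,\frac{55}{84}+\varepsilon\right)=\left(\frac{55}{194}+\varepsilon,\frac{55}{97}+\varepsilon\right)$ yields $z^{55/194}D^{42/97}+R^2z^{-1}$ (the $R$-exponent cancels), not $z^kR^{\ell-k}\min(1,D^{1-2\ell})$, which even has the wrong monotonicity in $D$. Second, and more seriously, your closing paragraph misdiagnoses the argument: there is no residual $R^{1/2}$ obstacle in the small-$d$ range, no Vaughan identity is applied there, and the triple sums $\sum_n a_n\sum_e\mu(e)\sum_m e(z/(mn^2e^2))$ you propose to estimate never arise. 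The M\"obius factor on the short outer variable is simply bounded by $1$; the barrier is circumvented by the choice of the split point, and Vaughan's identity for $\mu$ enters only inside Lemma~\ref{le:mu_square}, i.e.\ in the large-$d$ piece you already handled. Equating $z^{55/194}D^{42/97}$ with $z^{1/6}R^{5/6}D^{-371/291}$ gives $D^{497/291}=z^{-34/291}R^{5/6}$, i.e.\ $U=D^2=\left(z^{-68}R^{485}\right)^{1/497}$, and substituting back produces exactly $z^{3497/13774}R^{15/71}$; one must also dispose of the trivial range $R<z^{68/485}$ so that $1\leqslant U\leqslant R$, and check that the leftover terms are dominated using $R\leqslant z^{7/10}$. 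So all the ingredients are on your table, but the extra decomposition you announce at the end is unnecessary, and since you leave it unexecuted the write-up as it stands does not actually close the estimate.
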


\begin{proof}
If $R < z^{\frac{68}{485}}$, then trivially
$$\sum_{R < n \leqslant R_1} \mu_2(n) \, e \left( \frac{z}{n} \right) \ll R \ll z^{\frac{68}{485}} \ll z^{\frac{\np{3497}}{\np{13774}}} R^{\frac{15}{71}}$$
so that we may assume $R \geqslant z^{\frac{68}{485}}$. Using Corollary~\ref{cor:Dirichlet_exponential_principle} with $f=\chi_2$ and $g = \mathbf{1}$, we derive for all $1 \leqslant U \leqslant R$
\begin{multline*}
   \sum_{R < n \leqslant R_1} \mu_2(n) \, e \left( \frac{z}{n} \right) = \sum_{n \leqslant \sqrt{\frac{UR_1}{R}}} \mu(n) \sum_{\frac{R}{n} < m \leqslant \frac{R_1}{n}} e \left( \frac{z}{mn^2}\right) \\
   + \sum_{n \leqslant \frac{R}{U}} \ \sum_{\sqrt{\frac{R}{n}} <m \sqrt{\frac{R_1}{n}}} \mu(m) \, e \left( \frac{z}{m^2n}\right) -  \sum_{\sqrt{U} < n \leqslant \sqrt{\frac{UR_1}{R}}} \mu(n) \sum_{\frac{R}{n} < m \leqslant \frac{R}{U}} e \left( \frac{z}{mn^2}\right).
\end{multline*}
For $S_1$ and $S_3$, we apply the exponent pair $(k,\ell)$ yielding
\begin{align*}
   \left| S_1 \right| & \leqslant \sum_{n \leqslant \sqrt{\frac{UR_1}{R}}} \left| \sum_{\frac{R}{n} < m \leqslant \frac{R}{n}} e \left( \frac{z}{mn^2}\right) \right| \\
   & \ll \sum_{n \leqslant \sqrt{2U}} \left\lbrace \left( \frac{z}{R}\right)^k \left( \frac{R}{n^2} \right)^{\ell-k} + \frac{R^2}{n^2z} \right\rbrace \\
   & \ll z^k R^{\ell-2k} \sum_{n \leqslant \sqrt{2U}} \frac{1}{n^{2(\ell-k)}} + R^2 z^{-1}
\end{align*}
and similarly for $S_3$. Choosing $(k,\ell) = BA\left( \frac{13}{84} + \varepsilon, \frac{55}{84} + \varepsilon \right) =  \left( \frac{55}{194} + \varepsilon, \frac{55}{97} + \varepsilon \right)$, we derive
$$R^{-\varepsilon} \left( S_1 + S_3 \right) \ll z^{55/194} \sum_{n \leqslant \sqrt{2U}} \frac{1}{n^{55/97}} + R^2 z^{-1} \ll z^{55/194} U^{21/97} + R^2 z^{-1}.$$
Now noticing that the condition $R \leqslant z^{7/10}$ entails that $\sqrt{R/n} \leqslant (z/n)^{2/5}$ for all $n \in \Z_{\geqslant 1}$, we use \eqref{eq:mu_square} for the sum $S_2$, which gives
\begin{align*}
   R^{-\varepsilon} \left| S_2 \right| & \leqslant \sum_{n \leqslant \frac{R}{U}} \left| \sum_{\sqrt{\frac{R}{n}} <m \sqrt{\frac{R_1}{n}}} \mu(m) \, e \left( \frac{z}{m^2n}\right) \right| \\
   & \ll \sum_{n \leqslant \frac{R}{U}} \left\lbrace \left( \frac{z}{n} \right)^{1/6} \left( \frac{R}{n}\right)^{19/97} + \left( \frac{R}{n}\right)^{7/16} \right\rbrace \\
   & \ll z^{1/6} R^{5/6} U^{-371/582} + RU^{-9/16}.
\end{align*}
We choose $U = \left( z^{-68} R^{485} \right)^{1/497}$. Note that the condition $R \geqslant z^{68/485}$ ensures that $1 \leqslant U \leqslant R$, and this choice of $U$ yields the asserted result with the extra terms $z^{\frac{153}{\np{1988}}} R^{\frac{\np{3587}}{7952}} + R^2z^{-1}$, which are both easily seen to be dominated by the term $z^{\np{3497}/\np{13774}} R^{15/71}$ via the hypothesis $R \leqslant z^{7/10}$.
\end{proof}

\begin{proof}[Proof of Theorem~\ref{th:squarefree}]
By Proposition~\ref{pro:preliminary}, we derive
$$\sum_{n \leqslant x} \mu_2 \left( \left \lfloor \frac{x}{n} \right \rfloor \right) = x \sum_{n=1}^\infty \frac{\mu_2(n)}{n(n+1)} + R(x)$$
with, for all $x^{1/3} \leqslant N < x^{1/2}$ and all $H \geqslant 1$
\begin{align*}
   x^{-\varepsilon} R(x) & \ll N + \max_{N < D \leqslant x/N} \left\lbrace \frac{D}{H} \right. \\
   & \left. \qquad + \sum_{h \leqslant H} \frac{1}{h} \left( \left| \sum_{D < d \leqslant 2D} \mu_2(d) \, e\left( \frac{hx}{d}\right) \right| + \left| \sum_{D < d \leqslant 2D} \mu_2(d) \, e\left( \frac{hx}{d+1}\right) \right|\right) \right\rbrace \\
   & \ll N + \max_{N < D \leqslant \min(x/N,x^{7/10})} \left\lbrace \frac{D}{H} + \sum_{h \leqslant H} \frac{1}{h} \left( 1 + \frac{hx}{D^2} \right) (hx)^{\frac{\np{3497}}{\np{13774}}} D^{\frac{15}{71}} \right\rbrace \\
   & \ll N + \max_{N < D \leqslant x/N} \left\lbrace \frac{D}{H} + (Hx)^{\frac{\np{17271}}{\np{13774}}} D^{- \frac{127}{71}} + (Hx)^{\frac{\np{3497}}{\np{13774}}} D^{\frac{15}{71}} \right\rbrace \\
   & \ll N + \max_{N < D \leqslant x/N} \left\lbrace \left( x^{\np{17271}} D^{- \np{7367}} \right)^{\frac{1}{\np{31045}}}  + x^{\frac{\np{17271}}{\np{13774}}} D^{- \frac{127}{71}} +  \left( x^{\np{3497}} D^{\np{6407}} \right)^{\frac{1}{\np{17271}}} + x^{\frac{\np{3497}}{\np{13774}}} D^{\frac{15}{71}} \right\rbrace 
\end{align*}
where we used Srinivasan optimization lemma in the last line, and the fact that $\min(\frac{x}{N},x^{7/10}) = \frac{x}{N}$. Therefore
$$x^{-\varepsilon} R(x) \ll N + \left( x^{\np{17271}} N^{- \np{7367}} \right)^{\frac{1}{\np{31045}}} + x^{\frac{\np{17271}}{\np{13774}}} N^{- \frac{127}{71}} +  \left( x^{\np{9904}} N^{-\np{6407}} \right)^{\frac{1}{\np{17271}}} + x^{\frac{\np{6407}}{\np{13774}}} N^{-\frac{15}{71}}$$
and choosing $N = x^{\frac{\np{1919}}{\np{4268}}}$ yields the asserted bound.
\end{proof}

\noindent
The next result is the analog of Proposition~\ref{pro:squarefree} for the function $2^\omega$. The proof is much simpler.

\begin{prop}
\label{pro:unitary}
Let $1 < R < R_1 \leqslant 2R$, $z \geqslant 1$, $(k, \ell)$ be an exponent pair and assume $R \leqslant z^{\frac{2(k+1)}{3(k+1)-\ell}}$. Then, for all $\varepsilon > 0$
$$R^{- \varepsilon} \sum_{R < n \leqslant R_1} 2^{\omega(n)} \, e \left( \frac{z}{n} \right) \ll z^{k} R^{\frac{1+\ell-3k}{2}}.$$
\end{prop}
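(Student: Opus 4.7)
The plan is to exploit the convolution identity $2^{\omega} = \chi_2 \star \tau$, equivalent to the Dirichlet series factorization $\sum_{n\geqslant 1} 2^{\omega(n)} n^{-s} = \zeta(s)^2/\zeta(2s)$, in order to reduce the estimation of
$$S := \sum_{R < n \leqslant R_1} 2^{\omega(n)} \, e\!\left( \frac{z}{n}\right)$$
to that of a $\tau$-sum with the smooth phase $z/(d^2 m)$, to which Proposition~\ref{pro:exp_sum_divisor} with $r=2$ directly applies. Concretely, I would begin by writing
$$S = \sum_{d \leqslant \sqrt{R_1}} \mu(d) \sum_{R/d^2 < m \leqslant R_1/d^2} \tau(m) \, e\!\left( \frac{z}{d^2 m}\right)$$
and treating each inner sum separately, after a routine dyadic/boundary reduction so that Proposition~\ref{pro:exp_sum_divisor} can be invoked on a proper interval of the form $(M, 2M]$.

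For each $d$ with $d^2 \leqslant R$, set $M := R/d^2$; the phase $F_d(m) := z/(d^2 m)$ then satisfies $|F_d^{(j)}(m)| \asymp (z/R) \, M^{-j}$ on $m \asymp M$, so Proposition~\ref{pro:exp_sum_divisor} applies with $T := z/R$ and yields
$$\sum_{R/d^2 < m \leqslant R_1/d^2} \tau(m) \, e\!\left( \frac{z}{d^2 m}\right) \ll \left( \frac{z}{R} \right)^{k} \left( \frac{R}{d^2}\right)^{\frac{\ell-k+1}{2}} (\log R)^2 + \frac{R^2}{d^2 z}(\log R)^3.$$
Summing over $d$, the first term contributes $z^k R^{(1+\ell-3k)/2}$ times the (at worst logarithmic) series $\sum_d d^{-(\ell-k+1)}$, since $\ell \geqslant k$ for any exponent pair; the second contributes $(R^2/z)\sum_d d^{-2} \ll R^2/z$. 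Both logarithms are swallowed by $R^\varepsilon$.

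It then only remains to observe that the hypothesis $R \leqslant z^{2(k+1)/(3(k+1)-\ell)}$ rearranges exactly to $R^2/z \leqslant z^k R^{(1+\ell-3k)/2}$, so the second contribution is dominated by the first and the claimed bound follows. The only subtlety I foresee is the boundary range $\sqrt{R} < d \leqslant \sqrt{R_1}$, where the inner interval has length $O(1)$ and Proposition~\ref{pro:exp_sum_divisor} does not directly apply; in this range the trivial estimate yields a contribution $\ll R^{1/2+\varepsilon}$, which is harmless.
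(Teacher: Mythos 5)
Your proposal is correct and follows essentially the same route as the paper: the identity $2^{\omega}=\chi_{2}\star\tau$ (obtained in the paper via Corollary~\ref{cor:Dirichlet_exponential_principle} with $U=R$, which collapses to the same exact expansion you wrote), then Proposition~\ref{pro:exp_sum_divisor} with $r=2$, $T=z/R$ on each inner sum, summation over $d$ using $1+\ell-k\geqslant 1$, and absorption of $R^{2}z^{-1}$ via the hypothesis on $R$. Your extra remark about the boundary range $\sqrt{R}<d\leqslant\sqrt{R_{1}}$ is a harmless refinement the paper glosses over.
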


\begin{proof}
Using Corollary~\ref{cor:Dirichlet_exponential_principle} with $f=\chi_2$, $g = \tau$ and $U=R$, we derive
$$\sum_{R < n \leqslant R_1} 2^{\omega(n)} \, e \left( \frac{z}{n} \right) = \sum_{n \leqslant \sqrt{R_1}} \mu(n) \sum_{\frac{R}{n^2} < m \leqslant \frac{R_1}{n^2}} \tau(m) \, e \left( \frac{z}{mn^2} \right)$$
and Proposition~\ref{pro:exp_sum_divisor} with $r=2$ yields
\begin{align*}
   R^{- \varepsilon} \sum_{R < n \leqslant R_1} 2^{\omega(n)} \, e \left( \frac{z}{n} \right) & \ll \sum_{n \leqslant \sqrt{R_1}} \left\lbrace \left( \frac{z}{R}\right)^k \left(\frac{R}{n^2} \right)^{\frac{1+\ell-k}{2}} + \frac{R^2}{n^2 z} \right\rbrace \\
   & \ll z^{k} R^{\frac{1+\ell-3k}{2}} \sum_{n \leqslant \sqrt{R_1}} \frac{1}{n^{1+\ell-k}} + R^2 z^{-1} \ll z^{k} R^{\frac{1+\ell-3k}{2}}
\end{align*}
since $1+\ell-k \geqslant 1$ and the term $R^2z^{-1}$ is absorbed by the term $z^{k} R^{\frac{1+\ell-3k}{2}}$ with the help of the hypothesis $R \leqslant z^{\frac{2(k+1)}{3(k+1)-\ell}}$.
\end{proof}

\begin{proof}[Proof of Theorem~\ref{th:unitary_divisors}]
First note that, if $x^{1/3} \leqslant N < x^{1/2}$ and if $(k,\ell)$ is an exponent pair, then
$$x^{\frac{2(k+1)}{3(k+1)-\ell}} \geqslant x^{2/3} \geqslant \frac{x}{N}.$$
Now we proceed as in the proof of Theorem~\ref{th:squarefree} above, using Propositions~\ref{pro:preliminary} and~\ref{pro:unitary} to derive
$$\sum_{n \leqslant x} 2^{\omega\left( \left \lfloor x/n \right \rfloor \right)} = x \sum_{n=1}^\infty \frac{2^{\omega(n)}}{n(n+1)} + R(x)$$
with, for all $x^{1/3} \leqslant N < x^{1/2}$ and all $H \geqslant 1$
\begin{align*}
   x^{-\varepsilon} R(x) & \ll N + \max_{N < D \leqslant x/N} \left\lbrace \frac{D}{H} \right. \\
   & \left. \qquad + \sum_{h \leqslant H} \frac{1}{h} \left( \left| \sum_{D < d \leqslant 2D} 2^{\omega(d)} \, e\left( \frac{hx}{d}\right) \right| + \left| \sum_{D < d \leqslant 2D} 2^{\omega(d)} \, e\left( \frac{hx}{d+1}\right) \right|\right) \right\rbrace \\
   & \ll N + \max_{N < D \leqslant \min\Bigl (x/N \, , \, x^{\frac{2(k+1)}{3(k+1)-\ell}} \Bigr )} \left\lbrace \frac{D}{H} + \sum_{h \leqslant H} \frac{1}{h} \left( 1 + \frac{hx}{D^2} \right) (hx)^k D^{\frac{1+\ell-3k}{2}} \right\rbrace \\
   & \ll N + \max_{N < D \leqslant x/N} \left\lbrace \frac{D}{H} + (Hx)^{1+k} D^{- \frac{3+3k-\ell}{2}} + (Hx)^k D^{\frac{1+\ell-3k}{2}} \right\rbrace \\
   & \ll N + \max_{N < D \leqslant x/N} \left\lbrace x^{\frac{k+1}{k+2}} D^{- \frac{1+k-\ell}{2(k+2)}} + x^{1+k} D^{- \frac{3+3k-\ell}{2}} + x^{\frac{k}{k+1}} D^{\frac{1+\ell-k}{2(k+1)}} + x^k D^{\frac{1+\ell-3k}{2}} \right\rbrace 
\end{align*}
where we used Srinivasan optimization lemma again, and hence
$$x^{-\varepsilon} R(x) \ll N + x^{\frac{k+1}{k+2}} N^{- \frac{1+k-\ell}{2(k+2)}} + x^{1+k} N^{- \frac{3+3k-\ell}{2}} + x^{\frac{k+\ell+1}{2(k+1)}} N^{- \frac{1+\ell-k}{2(k+1)}}+ x^{\frac{1+\ell-k}{2}} N^{- \frac{1+\ell-3k}{2}}.$$
Choose $N = x^{\frac{2(k+1)}{3k-\ell+5}}$. Note that that the condition $k+\ell < 1$ ensures that $x^{1/3} \leqslant N < x^{1/2}$. We then get the asserted result with the following two extra terms
$$x^{\frac{5k^2+8k+(3-\ell)(1+\ell)}{2(k+1)(3k-\ell+5)}} + x^{\frac{3k^2+(3+2k-\ell)(1+\ell)}{2(3k-\ell+5)}}$$
which are absorbed by the first term via the condition $k+\ell < 1$. The last part of the Theorem is derived with the exponent pair $(k,\ell) = \left( \frac{13}{84} + \varepsilon, \frac{55}{84} + \varepsilon \right)$.
\end{proof}

\section{The function $\omega$}

\begin{prop}
\label{pro:omega}
Let $1 < R < R_1 \leqslant 2R$ and $z \geqslant 1$ such that $R \leqslant z^{26/41}$. Then, for all $\varepsilon > 0$
$$R^{- \varepsilon} \sum_{R < n \leqslant R_1} \omega(n) \, e \left( \frac{z}{n} \right) \ll z^{1/6} R^{128/195}.$$
\end{prop}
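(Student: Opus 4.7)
My plan is to decompose $\omega = \mathbf{1}_{\mathbb{P}} \star \mathbf{1}$, where $\mathbf{1}_{\mathbb{P}}$ is the indicator function of the primes, and feed this into Corollary~\ref{cor:Dirichlet_exponential_principle} with $f = \mathbf{1}_{\mathbb{P}}$, $g = \mathbf{1}$ and $F(n) = z/n$. This splits the target sum into three pieces $S_1, S_2, S_3$ controlled by a free parameter $U \in [1, R]$: in $S_1$ and $S_3$ the outer variable is a prime $p \ll U$ and the inner variable $m$ runs smoothly, while in $S_2$ the outer variable $n \leqslant R/U$ is smooth and the inner variable is a prime in $(R/n, R_1/n]$.

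For $S_1$ and $S_3$ I would apply the exponent pair $(k,\ell) = \left(\tfrac{1}{6}, \tfrac{2}{3}\right)$ to the inner exponential sum (whose phase $z/(pm)$ has magnitude $T \asymp z/R$ at scale $R/p$), yielding the bound $(z/R)^{1/6}(R/p)^{1/2} + R^2/(pz)$; summing over primes via the trivial estimate $\sum_{p\leqslant V} p^{-1/2} \ll V^{1/2}$ gives $S_1 + S_3 \ll z^{\varepsilon}\left( z^{1/6} R^{1/3} U^{1/2} + R^2 z^{-1}\right)$. For $S_2$, I would first convert the inner sum over primes to one over the von Mangoldt function by partial summation (prime-power terms being negligible), and then invoke Proposition~\ref{pro:Lambda_estimate_resultat} with $(k,\ell) = \left(\tfrac{1}{6}, \tfrac{2}{3}\right)$ and ``$z$'' replaced by $z/n$. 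Its hypothesis $R/n \leqslant (z/n)^{2/3}$ is implied, uniformly in $n \geqslant 1$, by $R \leqslant z^{26/41} < z^{2/3}$. Summing the resulting bound $(z/n)^{1/6}(R/n)^{47/84} + (R/n)^{7/8}$ over $n \leqslant R/U$ then gives $S_2 \ll z^{\varepsilon}\left( z^{1/6} R^{5/6} U^{-23/84} + R \cdot U^{-1/8}\right)$.

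It remains to optimise $U$. Balancing $z^{1/6} R^{1/3} U^{1/2}$ against $z^{1/6} R^{5/6} U^{-23/84}$ prescribes $U = R^{42/65}$, at which point both equal $z^{1/6} R^{128/195}$, the claimed bound. The leftover term $R \cdot U^{-1/8} = R^{239/260}$ is absorbed into $z^{1/6} R^{128/195}$ exactly when $R^{41/156} \leqslant z^{1/6}$, i.e.\ when $R \leqslant z^{26/41}$: this is precisely the stated hypothesis, and it is the binding constraint (the contribution $R^2 z^{-1}$ is controlled under a weaker bound on $R$). The principal technical obstacle I anticipate is the clean transfer from $\mathbf{1}_{\mathbb{P}}$ to $\Lambda$ in $S_2$ and the verification that Proposition~\ref{pro:Lambda_estimate_resultat} remains applicable uniformly after the rescaling $z \mapsto z/n$; the remaining steps amount to routine exponential-sum bookkeeping.
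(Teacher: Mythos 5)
Your proposal is correct and follows essentially the same route as the paper: the paper likewise applies Corollary~\ref{cor:Dirichlet_exponential_principle} to $\omega=\mathbf{1}_{\mathbb{P}}\star\mathbf{1}$, bounds $S_1$ and $S_3$ with the exponent pair $\left(\frac{1}{6},\frac{2}{3}\right)$ to get $z^{1/6}R^{1/3}U^{1/2}+R^2z^{-1}$, treats $S_2$ via partial summation and Proposition~\ref{pro:Lambda_estimate_resultat} to get $z^{1/6}R^{5/6}U^{-23/84}+RU^{-1/8}$, and optimises at $U=R^{42/65}$. Your identification of $RU^{-1/8}$ as the binding term forcing $R\leqslant z^{26/41}$ matches the paper's verification that the leftover terms are absorbed under that hypothesis.
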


\begin{proof}
Using Corollary~\ref{cor:Dirichlet_exponential_principle}, we derive for $1 \leqslant U \leqslant R$
\begin{multline*}
   \sum_{R < n \leqslant R_1} \omega(n) \, e \left( \frac{z}{n} \right) = \sum_{p \leqslant \frac{UR_1}{R}} \ \sum_{\frac{R}{p} < n \leqslant \frac{R_1}{p}} e \left( \frac{z}{np} \right) + \sum_{n \leqslant \frac{R}{U}} \ \sum_{\frac{R}{n} < p \leqslant \frac{R_1}{n}} e \left( \frac{z}{np} \right) \\
   - \sum_{U < p \leqslant \frac{UR_1}{R}} \, \sum_{\frac{R}{p} < n \leqslant \frac{R}{U}} e \left( \frac{z}{np} \right) := S_1 + S_2 + S_3.
\end{multline*}
With the exponent pair $(k,\ell)$, we get
\begin{align*}
   \left| S_1 \right| & \leqslant \sum_{p \leqslant \frac{UR_1}{R}} \left| \sum_{\frac{R}{p} < n \leqslant \frac{R_1}{p}} e \left( \frac{z}{np} \right)  \right| \ll \sum_{p \leqslant 2U} \left( z^k p^{k-\ell} R^{\ell-2k} + \frac{R^2}{pz} \right)  \\
   & \ll U^{k+1-\ell} z^k R^{\ell - 2k}  + R^2 z^{-1} \log \log (e^2R)
\end{align*}
and a similar bound holds for $S_3$. By partial summation and Proposition~\ref{pro:Lambda_estimate_resultat}, we derive
\begin{align*}
   S_2 & \ll \sum_{n \leqslant \frac{R}{U}} \ \max_{\frac{R}{n} \leqslant M \leqslant \frac{R_1}{n}} \left| \sum_{\frac{R}{n} < m \leqslant M} \Lambda(m) e \left( \frac{z}{mn} \right) + M^{1/2} \right| \\
   & \ll \sum_{n \leqslant \frac{R}{U}} \left\lbrace \left( \frac{z}{n}\right)^{1/6} \left( \frac{R}{n} \right)^{\frac{7k+\ell+6}{12(k+1)}} + \left( \frac{R}{n} \right)^{7/8} + \left( \frac{R}{n} \right)^{1/2} \right\rbrace \\
   & \ll \sum_{n \leqslant \frac{R}{U}} \left\lbrace z^{1/6} R^{\frac{7k+\ell+6}{12(k+1)}} n^{-\frac{9k+\ell+8}{12(k+1)}} + \left( \frac{R}{n} \right)^{7/8} \right\rbrace \\
   & \ll z^{1/6} R^{5/6} U^{-\frac{3k-\ell+4}{12(k+1)}} + RU^{-1/8}.
\end{align*}
Optimizing in $U$ and taking $(k,\ell) = \left( \frac{1}{6},\frac{2}{3} \right)$ yields the asserted result plus the extra terms
$$z^{1/30} R^{13/15} + R^{7/8} + R^2z^{-1}$$
which are all absorbed by the term $z^{1/6} R^{128/195}$ via the condition $R \leqslant z^{26/41}$.
\end{proof}

\begin{proof}[Proof of Theorem~\ref{th:omega}]
As in the previous theorems, using Propositions~\ref{pro:preliminary} and~\ref{pro:omega}, we get
$$\sum_{n \leqslant x} \omega \left( \left \lfloor x/n \right \rfloor \right) = x \sum_{n=1}^\infty \frac{\omega(n)}{n(n+1)} + R(x)$$
with, for all $x^{1/3} \leqslant N < x^{1/2}$ and all $H \geqslant 1$
\begin{align*}
   x^{-\varepsilon} R(x) & \ll N + \max_{N < D \leqslant x/N} \left\lbrace \frac{D}{H} \right. \\
   & \left. \qquad + \sum_{h \leqslant H} \frac{1}{h} \left( \left| \sum_{D < d \leqslant 2D} \omega(d) \, e\left( \frac{hx}{d}\right) \right| + \left| \sum_{D < d \leqslant 2D} \omega(d) \, e\left( \frac{hx}{d+1}\right) \right|\right) \right\rbrace \\
   & \ll N + \max_{N < D \leqslant \min\Bigl (x/N \, , \, x^{26/41} \Bigr )} \left\lbrace \frac{D}{H} + \sum_{h \leqslant H} \frac{1}{h} \left( 1 + \frac{hx}{D^2} \right) (hx)^{1/6} D^{128/195} \right\rbrace. 
\end{align*}
Assume $x^{15/41} \leqslant N < x^{1/2}$. We derive
\begin{align*}
   x^{-\varepsilon} R(x) & \ll N + \max_{N < D \leqslant x/N} \left\lbrace \frac{D}{H} + (Hx)^{1/6} D^{128/195} + (Hx)^{7/6} D^{-262/195} \right\rbrace \\
   & \ll N + \max_{N < D \leqslant x/N} \left\lbrace x^{1/7} D^{321/455} + x^{7/13} D^{-69/845} + x^{1/6} D^{128/195} + x^{7/6} D^{-262/195} \right\rbrace \\
   & \ll N + \left( x^{386} N^{-321} \right)^{1/455} + x^{7/13} N^{-69/845} + x^{107/130} N^{-128/195} + x^{7/6} N^{-262/195} \\
   & \ll x^{455/914} + x^{841/\np{1690}} + x^{193/388} + x^{451/910} + x^{321/646} + x^{193/390} + x^{15/41} \ll x^{455/914}
\end{align*}
as claimed.
\end{proof}

\end{document}